\definecolor{darkblue}{rgb}{0.05, .05, .65}
\definecolor{darkgreen}{rgb}{0.05, .70, .05}
\definecolor{darkred}{rgb}{0.8,0,0}
\definecolor{darkbrown}{rgb}{0.4, 0.26, 0.13}
\setlist[itemize]{noitemsep, topsep=0pt}
\newtheorem{theorem}{Theorem}[section]
\newtheorem{proposition}[theorem]{Proposition}
\newtheorem{corollary}[theorem]{Corollary}
\theoremstyle{remark}
\newtheorem*{remark2}{Remark}
\theoremstyle{definition}
\renewcommand{\maketitle}{%
	\begin{center}
		{\Large\bfseries \@title \par}
		\vskip 1em
		{\normalsize \@author \par}
		\vskip 0.5em
		{\footnotesize \@date \par}
	\end{center}
	\vskip 1em
}
\newcommand{\authorinfo}[3]{%
	\par\addvspace{8pt}%
	\noindent\footnotesize
	\noindent\textsc{#1}\par
	\noindent\textit{#2}\par
	\noindent\texttt{#3}\par
}
\newcommand{\subjclass}[1]{%
	\par\addvspace{6pt}%
	\noindent\footnotesize\textbf{Mathematics Subject Classification (2020):}
	\ #1\par
}
\newcommand{\keywords}[1]{%
	\par\smallskip
	\noindent\footnotesize\textbf{Key words and phrases:}
	\ #1\par
}
\newcommand{\R}{\mathbb{R}}
\newcommand{\RN}{\mathbb{R}^{N}}
\newcommand{\Q}[1]{Q^{(#1)}}
\newcommand{\hQ}[3]{Q^{(#1)}_{#2#3}}
\newcommand{\mP}[3]{P^{(#1)}_{#2#3}}
\newcommand{\p}[1]{P^{(#1)}}
\newcommand{\suma}[2]{\mathop{\sum}\limits^{#2}_{#1}}
\newcommand{\Qn}[1]{\mathcal{Q}^{(#1)}_{n}}
\newcommand{\Pn}[1]{\mathcal{P}^{(#1)}_{n}}
\newcommand{\dist}[1]{{\rm d}(#1) }
\newcommand{\rd}{{\rm d}}
\newcommand{\lp}{\left(}
\newcommand{\rp}{\right)}
\newcommand{\lb}{\left[}
\newcommand{\rb}{\right]}
\newcommand{\lsb}{\left\{}
\newcommand{\rsb}{\right\}}
\newcommand{\copyrightnotice}[1]{%
	\gdef\thecopyright{#1}%
}
\newcommand{\printcopyright}{%
	\begingroup
	\small \thecopyright
	\endgroup
}
\newcommand{\thecopyright}{}
\numberwithin{equation}{section}
\begin{document}
	
	\title{The Pohozaev identity for the\\ Spectral Fractional Laplacian}
	\author{Itahisa Barrios-Cubas, Matteo Bonforte, Mar\'{i}a del Mar Gonz\'{a}lez and Clara Torres-Latorre}
	\date{}
	\maketitle
	
	\begin{abstract}
		In this paper, we prove a Pohozaev identity for the Spectral Fractional Laplacian (SFL). This identity allows us to establish non-existence results for the semilinear Dirichlet problem $(-\Delta|_{\Omega})^su = f(u)$ in star-shaped domains. The first such identity for non-local operators was established by Ros-Oton and Serra in 2014 for the Restricted Fractional Laplacian (RFL). However, the SFL differs fundamentally from the RFL, and the integration by parts strategy of Ros-Oton and Serra cannot be applied. Instead, we develop a novel spectral approach that exploits the underlying quadratic structure. Our main result expresses the identity as a Schur product of the classical Pohozaev quadratic form and a transition matrix that depends on the eigenvalues of the Laplacian and the fractional exponent.
	\end{abstract}
	
	\section{Introduction}
	
	Pohozaev identities are a fundamental tool in the analysis of elliptic PDEs, with applications ranging from non-existence results to geometric problems. Notable consequences include the non-existence results by Pucci and Serrin \cite{Pucci-Serrin-variational} or Br\'ezis and Nirenberg \cite{BrezisPositiveSolu1983Nirenberg} for semilinear PDEs, as well as Schoen's application \cite{Schoen-existence} to the Yamabe problem in conformal geometry, which is the higher-dimensional analogue of Kazdan and Warner's curvature prescription problem for surfaces \cite{Kazdan-Warner}.
	
	These identities can be understood as conservation laws arising from symmetries of the operator such as the scaling invariance of the Laplacian. They also encode the geometry of the domain $\Omega$ and the normal derivative of the function on $\partial\Omega$.
	
	Non-local versions extend these results to fractional and integro-differential equations, which model anomalous diffusion, long-range interactions, and non-local phenomena in physics and probability, see \cite{RosOton2024IntDiffFernandezReal}. The first Pohozaev identity for non-local operators was established in the influential paper by Ros-Oton and Serra \cite{ros2014pohozaev}, where they used the invariances of the fractional Laplacian to obtain a new Pohozaev formula. Their breakthrough consisted of finding the appropriate boundary term for the Restricted Fractional Laplacian (RFL) on a domain $\Omega$.
	
	However, it is not possible---to the best of our knowledge---to mimic the proof by Ros-Oton and Serra in the case of the Spectral Fractional Laplacian (SFL) in a domain. The situation is fundamentally different: the SFL is not invariant under dilations or translations, has a kernel that degenerates at the boundary, and sharp boundary regularity is not yet known. These obstructions have prevented the development of a Pohozaev identity for the SFL for over a decade.
	
	To overcome these obstacles, we propose a novel spectral approach based on the underlying quadratic structure to obtain a new Pohozaev identity for the SFL. We apply this identity to establish non-existence of non-trivial solutions for semilinear Dirichlet problems in star-shaped domains.
	
	\color{darkred}
	\normalcolor
	
	\vspace*{\fill}
	\noindent\footnotesize
	\subjclass{35J61, 35R11}
	\keywords{Semilinear elliptic PDE; Non-local operators; Pohozaev identity; Spectral Fractional Laplacian.}
	
	\fancypagestyle{firstpage}{
		\fancyhf{}
		\renewcommand{\headrulewidth}{0pt}
		\renewcommand{\footrulewidth}{0pt}
		\fancyfoot[C]{\printcopyright}
	}
	
	\copyrightnotice{\sl\footnotesize\copyright~2025 by the authors. This paper may be reproduced, in its entirety, for non-commercial purposes.}
	\thispagestyle{firstpage}
	
	\normalsize
	
	\newpage
	
	\subsection{Main results}\label{section:main}
	Let $\Omega\subset\RN$ be a bounded $C^{1,1}$ domain and $s\in(0,1)$. 
	Let $\{\lambda_{k}, \varphi_{k}\}_{k=1}^\infty$ denote the eigenelements 
	of the classical Dirichlet Laplacian, where $\lambda_k$ are the eigenvalues 
	$0 < \lambda_1 \leq \lambda_2 \leq \dots$ with $\lambda_{k} \to \infty$ 
	(repeated according to multiplicity), and $\varphi_{k}$ are the eigenfunctions 
	satisfying
	\begin{equation*}
		\begin{cases}
			-\Delta \varphi_{k}=\lambda_{k}\varphi_{k}, & \text{in }\Omega,
			\\
			\varphi_{k}=0, & \text{on }\partial\Omega,
		\end{cases}
	\end{equation*}
	normalized to $\langle \varphi_{j},\varphi_{k}\rangle=\delta_{jk}$. Any $u \in L^{2}(\Omega)$ has Fourier coefficients $\hat{u}_{k} = \langle u, \varphi_k \rangle$ and expansion $u = \sum_{k=1}^{\infty} \hat{u}_{k} \varphi_{k}$. The Spectral Fractional Laplacian (SFL) is then defined as
	$$(-\Delta|_{\Omega})^{s}u = \sum_{j=1}^{\infty}\lambda_{j}^{s}\hat{u}_{j}\varphi_{j}.$$
	
	We recall the classical Pohozaev identity for the (local) Dirichlet Laplacian: 
	let $u\in C^{2}(\overline{\Omega})$ and $u=0$ on $\partial\Omega$, then
	\begin{equation} \label{eq: abstract Pohozaev local}
		Q^{(1)}[u] := \left(\frac{2-N}{2}\right)\int_{\Omega} u(-\Delta) u\,dx 
		- \int_{\Omega}(x\cdot \nabla u)(-\Delta)u\,dx
		=\frac{1}{2} \int_{\partial \Omega}\left|\frac{\partial u}{\partial\nu}\right|^{2}(x \cdot \nu)\,dS,
	\end{equation}
	where $\nu$ is the outward normal vector and $dS$ is the $(N-1)$-dimensional 
	surface measure. It is clear from the above expression that $Q^{(1)}[u]\ge 0$ 
	when the domain $\Omega$ is star-shaped. In terms of Fourier coefficients, 
	$Q^{(1)}$ can be written as the quadratic form 
	$Q^{(1)}[u] = \sum_{j,k} Q^{(1)}_{jk} \hat{u}_j \hat{u}_k$ with
	$$Q^{(1)}_{jk} = \frac{1}{2} \int_{\partial \Omega} (\nabla \varphi_{j} \cdot \nabla \varphi_{k})(x \cdot \nu)\,dS.$$
	
	Our main result is a Pohozaev identity for the SFL whose positivity is inherited from the classical Pohozaev identity and thus depends only on the geometry of the domain.
	
	\begin{theorem}[\bf Abstract Pohozaev identity for the SFL] \label{Pohozaev SFL}
		Let $\Omega$ be a bounded $C^{1,1}$ domain, $s\in (0,1)$ and $u\in H^{\max \lsb 2s, 3/2 \rsb }_0(\Omega)$. Define
		\begin{equation} \label{Pohozaev SFL id}
			\Q s [u]:=\lp\frac{2s-N}{2}\rp \int_{\Omega} u(-\Delta|_{\Omega})^{s} u\,dx - \int_{\Omega}(x \cdot \nabla u)(-\Delta|_{\Omega})^{s} u \,dx.
		\end{equation}
		Then $\Q s$ is a positive semidefinite quadratic form whenever $\Q 1$ is. In particular,
		\begin{equation*}
			\Q 1[u]\ge 0 \qquad\mbox{implies}\qquad\Q s [u]\geq 0.
		\end{equation*}
		Moreover, $\Q s$ can be expressed as
		\begin{equation}\label{conclusion-structure}
			\Q s =\p s\circ \Q 1,
		\end{equation}
		where $\circ$ denotes the Schur (or Hadamard) product. Equivalently, in coordinates,
		\begin{equation*}
			\Q s [u]=\suma{j,k=1}{\infty} \hQ sjk \hat{u}_{j} \hat{u}_{k}, \quad \text{where} \quad \hQ sjk= \mP s j k \hQ 1jk.
		\end{equation*}
		The transition matrix $\mP s{}{}$ is given by
		\begin{equation}\label{formula-P}
			\mP s j k=
			\begin{cases}
				\dfrac{\lambda_{j}^{s}-\lambda_{k}^{s}}{\lambda_{j}-\lambda_{k}}, & \mbox{if } \lambda_{j}\neq \lambda_{k}, \\[4mm]
				s\lambda_{j}^{s-1}, & \mbox{if } \lambda_{j} = \lambda_{k}.
			\end{cases}
		\end{equation}
	\end{theorem}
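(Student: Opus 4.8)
The plan is to pass to the eigenbasis $\{\varphi_k\}$ and compare $\Q s$ with $\Q 1$ coefficient by coefficient, deducing positivity at the end from the Schur product theorem. Writing $u=\sum_k\hat u_k\varphi_k$ and using $(-\Delta|_\Omega)^s u=\sum_j\lambda_j^s\hat u_j\varphi_j$, I would first expand \eqref{Pohozaev SFL id} for a finite linear combination $u$: with $\langle\varphi_j,\varphi_k\rangle=\delta_{jk}$, and after symmetrizing the resulting double sum (legitimate since $\hat u_j\hat u_k=\hat u_k\hat u_j$), one gets $\Q s[u]=\sum_{j,k}\hat u_j\hat u_k\,\hQ sjk$ with
\begin{equation*}
\hQ sjk=\frac{2s-N}{2}\,\lambda_j^{s}\,\delta_{jk}-\frac12\bigl(\lambda_k^{s}A_{jk}+\lambda_j^{s}A_{kj}\bigr),\qquad A_{jk}:=\int_\Omega(x\cdot\nabla\varphi_j)\,\varphi_k\,dx .
\end{equation*}
For $s=1$ this is, via the (polarized) classical Pohozaev identity, the coordinate form of $\Q 1$ recorded in the introduction. (When $\lambda_j=\lambda_k$ the individual numbers $A_{jk}$ depend on the orthonormal basis chosen inside the eigenspace, but the identities below — and the form $\Q s$ itself — do not.)

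The algebraic heart is then to establish $\hQ sjk=\mP sjk\,\hQ1jk$, and the only ingredient is that, by two integrations by parts using solely $\varphi_j=\varphi_k=0$ on $\partial\Omega$,
\begin{equation*}
A_{jk}+A_{kj}=\int_\Omega x\cdot\nabla(\varphi_j\varphi_k)\,dx=-N\,\delta_{jk}.
\end{equation*}
If $\lambda_j\neq\lambda_k$ then $\delta_{jk}=0$ and $A_{kj}=-A_{jk}$, so the formula above collapses to $\hQ1jk=\tfrac12(\lambda_j-\lambda_k)A_{jk}$ and $\hQ sjk=\tfrac12(\lambda_j^{s}-\lambda_k^{s})A_{jk}$; dividing gives $\hQ sjk=\tfrac{\lambda_j^{s}-\lambda_k^{s}}{\lambda_j-\lambda_k}\hQ1jk$. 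If $\lambda_j=\lambda_k$ with $j\neq k$, the same relation forces $\hQ1jk=\hQ sjk=0$, so the identity is trivial. Finally $j=k$ gives $A_{jj}=-N/2$, hence $\hQ1jj=\lambda_j$ and $\hQ sjj=s\lambda_j^{s}=s\lambda_j^{s-1}\hQ1jj$. In every case $\hQ sjk=\mP sjk\hQ1jk$, which is \eqref{conclusion-structure}.

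For positivity I would use the integral representation $\mu^{s}=\tfrac{\sin(\pi s)}{\pi}\int_0^\infty\tfrac{\mu}{\mu+r}\,r^{s-1}\,dr$ (valid for $s\in(0,1)$). Computing the divided difference from it, and letting the two arguments coincide for the diagonal, yields for all $j,k$
\begin{equation*}
\mP sjk=\frac{\sin(\pi s)}{\pi}\int_0^\infty\frac{r^{s}}{(\lambda_j+r)(\lambda_k+r)}\,dr ,
\end{equation*}
whence $\sum_{j,k}\xi_j\xi_k\,\mP sjk=\tfrac{\sin(\pi s)}{\pi}\int_0^\infty r^{s}\bigl(\sum_j\xi_j/(\lambda_j+r)\bigr)^{2}dr\ge0$ for every finitely supported real $(\xi_j)$; thus $\p s$ is positive semidefinite (a Gram matrix — equivalently, $\mu\mapsto\mu^{s}$ is operator monotone on $(0,\infty)$). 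If $\Q 1$ is positive semidefinite then so is each finite principal truncation $(\hQ1jk)_{j,k\le n}$, and by the Schur product theorem so is $(\mP sjk\hQ1jk)_{j,k\le n}=(\hQ sjk)_{j,k\le n}$; evaluating on $u\in\operatorname{span}\{\varphi_1,\dots,\varphi_n\}$ gives $\Q s[u]\ge0$, and a density argument extends this to all admissible $u$.

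The conceptual crux is the spectral reduction itself: once one sees that $\Q s$ diagonalizes against the eigenbasis as a Hadamard product with the Loewner (divided-difference) matrix $\p s$ of $\mu\mapsto\mu^{s}$, all boundary information is imported from the \emph{classical} Laplacian and no sharp SFL boundary regularity is needed. The remaining work — and the main obstacle to making the argument rigorous — is the passage from finite combinations to $u\in H^{\max\{2s,3/2\}}_0(\Omega)$: one must check that this regularity places both $(-\Delta|_\Omega)^s u$ and $x\cdot\nabla u$ in $L^2(\Omega)$, so that $\Q s$ is continuous there, that finite combinations are dense, and that the square-block truncations of $\sum_{j,k}\hQ sjk\hat u_j\hat u_k$ converge to $\Q s[u]$. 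This uses the trace bound $\|\partial_\nu\varphi_j\|_{L^2(\partial\Omega)}^2\lesssim\lambda_j$ and the regularity $\varphi_j\in H^2(\Omega)$, which is exactly why $\Omega$ is taken to be $C^{1,1}$.
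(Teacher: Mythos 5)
Your proof is correct, and while the algebraic reduction to a Hadamard product with the Loewner (divided-difference) matrix of $\mu\mapsto\mu^s$ matches the paper's strategy in substance, you prove positive semidefiniteness of $P^{(s)}$ by a genuinely different route. The paper substitutes $\lambda_k=e^{2\mu_k}$, factors out a rank-one positive matrix, reduces to the kernel $H_s(t)=\sinh(st)/\sinh t$, and then invokes Bochner's theorem after computing its Fourier transform explicitly; you instead use the elementary subordination formula $\mu^{s}=\tfrac{\sin(\pi s)}{\pi}\int_0^\infty\tfrac{\mu}{\mu+r}\,r^{s-1}\,dr$, take divided differences to obtain
\begin{equation*}
P^{(s)}_{jk}=\frac{\sin(\pi s)}{\pi}\int_0^\infty\frac{r^{s}}{(\lambda_j+r)(\lambda_k+r)}\,dr,
\end{equation*}
and read off that $P^{(s)}$ is a Gram matrix, hence positive semidefinite — no harmonic analysis required. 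This is the standard operator-monotonicity / L\"owner matrix argument and is arguably more self-contained than the Bochner route; conversely, the Bochner computation yields an explicit Fourier representation of the kernel, which some readers may find illuminating. Your derivation of the Schur structure is also organized slightly more economically: rather than plugging $\varphi_j$, $\varphi_k$, $\varphi_j+\varphi_k$ into the classical Pohozaev identity, you extract only $A_{jk}+A_{kj}=-N\delta_{jk}$ from integration by parts, note that $Q^{(s)}_{jk}=\tfrac12(\lambda_j^s-\lambda_k^s)A_{jk}$ is a single formula uniform in $s$, and obtain the transition matrix by taking the ratio with the $s=1$ case (identified with the boundary integral via the polarized classical identity). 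Both expositions handle the repeated-eigenvalue case the same way (everything vanishes). Your closing remarks about the $H^{\max\{2s,3/2\}}_0$ truncation/density step are apt: the paper also treats this tersely, and filling it in is the same technical work in both versions.
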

	
	\noindent\textsl{Application to Semilinear PDEs. }Theorem \ref{Pohozaev SFL} can be applied to semilinear elliptic equations involving the SFL and Dirichlet boundary conditions, in star-shaped domains $\Omega\subset \RN$:
	\begin{equation} \label{pb SFL dependence x}
		\begin{cases}
			(-\Delta|_{\Omega})^{s} u = f(x,u), & \mbox{in }\Omega,
			\\
			u = 0, & \mbox{on } \partial \Omega,
		\end{cases}
	\end{equation}
	where the non-linearity $f\in C^{0,1}_{loc}(\overline{\Omega}\times\R)$, and its primitive is defined as
	\begin{equation*}
		F(x,z)=\int_{0}^{z}f(x,t)\,dt\,.
	\end{equation*}
	The key idea is that, in star-shaped domains, $\Q 1[u]\geq 0$ and thus $\Q s[u]\geq 0$ by Theorem~\ref{Pohozaev SFL}.
	
	\begin{proposition}[\bf Pohozaev inequality for semilinear equations]\label{prop:Pohozaev-semilinear}
		Let $\Omega$ be a bounded $C^{1,1}$ star-shaped domain, $f\in C^{0,1}_{loc}(\overline{\Omega}\times\R)$ and $u$ be a bounded solution of \eqref{pb SFL dependence x}. Then,
		\begin{equation} \label{Pohozaev SFL dependence x}
			\lp\frac{2s-N}{2}\rp \int_{\Omega} uf(x,u)\,dx + N\int_{\Omega}F(x, u)\,dx + \int_{\Omega} x\cdot F_{x}(x, u) \, dx \geq 0\,.
		\end{equation}
	\end{proposition}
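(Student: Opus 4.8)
The plan is to derive the semilinear Pohozaev inequality \eqref{Pohozaev SFL dependence x} by specializing the abstract identity \eqref{Pohozaev SFL id} to a solution $u$ of \eqref{pb SFL dependence x} and then invoking the positivity part of Theorem~\ref{Pohozaev SFL}. First I would check that a bounded solution $u$ of \eqref{pb SFL dependence x} enjoys enough regularity to lie in $H^{\max\{2s,3/2\}}_0(\Omega)$, so that $\Q s[u]$ is well-defined; since $f\in C^{0,1}_{loc}$ and $u$ is bounded, $f(x,u)\in L^\infty(\Omega)$ and elliptic regularity for the SFL gives the required Sobolev bound (this is the kind of input one expects to cite from known SFL regularity theory). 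Granting this, Theorem~\ref{Pohozaev SFL} applies, and since $\Omega$ is star-shaped we have $\Q 1[u]\ge 0$ — indeed $Q^{(1)}_{jk}$ is built from $(x\cdot\nu)\ge 0$ on $\partial\Omega$ by the classical identity \eqref{eq: abstract Pohozaev local} — hence $\Q s[u]\ge 0$.

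Next I would rewrite the two integrals in $\Q s[u]$ using the equation $(-\Delta|_\Omega)^s u = f(x,u)$. The first term becomes $\left(\frac{2s-N}{2}\right)\int_\Omega u\,f(x,u)\,dx$ directly. For the second term, $-\int_\Omega (x\cdot\nabla u)(-\Delta|_\Omega)^s u\,dx = -\int_\Omega (x\cdot\nabla u)\,f(x,u)\,dx$, and here I would use the chain rule $x\cdot\nabla\big(F(x,u(x))\big) = (x\cdot\nabla u)\,f(x,u) + x\cdot F_x(x,u)$, so that $(x\cdot\nabla u)f(x,u) = x\cdot\nabla_x\big(F(x,u)\big) - x\cdot F_x(x,u)$ where the first gradient is the total spatial gradient of $x\mapsto F(x,u(x))$. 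Integrating by parts (divergence theorem), $\int_\Omega x\cdot\nabla\big(F(x,u)\big)\,dx = -N\int_\Omega F(x,u)\,dx + \int_{\partial\Omega} F(x,u)(x\cdot\nu)\,dS$, and the boundary integral vanishes because $u=0$ on $\partial\Omega$ forces $F(x,0)=0$ there. Collecting terms, $-\int_\Omega (x\cdot\nabla u)f(x,u)\,dx = N\int_\Omega F(x,u)\,dx + \int_\Omega x\cdot F_x(x,u)\,dx$, and adding the first term gives exactly the left-hand side of \eqref{Pohozaev SFL dependence x}, which is therefore $=\Q s[u]\ge 0$.

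The main obstacle I anticipate is the justification of the integration by parts and the regularity claim: one needs $F(x,u(\cdot))\in W^{1,1}(\Omega)$ with a well-defined trace, and one needs $u$ to be regular enough both for $x\cdot\nabla u\in L^2$ (so the second integral in $\Q s[u]$ makes sense) and for membership in $H^{\max\{2s,3/2\}}_0(\Omega)$ (so Theorem~\ref{Pohozaev SFL} applies). For bounded solutions with $C^{0,1}_{loc}$ nonlinearity this should follow from standard SFL regularity estimates, but the boundary behavior of $\nabla u$ near $\partial\Omega$ — precisely the delicate point that motivates the whole paper — must be handled with care; a density or approximation argument (approximating $u$ or $\Omega$) may be needed to make the divergence theorem rigorous up to the boundary. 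Once these technical points are in place, the rest is the elementary algebra above.
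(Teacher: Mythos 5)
Your proposal follows the same overall route as the paper: invoke Theorem~\ref{Pohozaev SFL} to get $\Q s[u]\ge 0$, then use the equation $(-\Delta|_\Omega)^su=f(x,u)$, the chain rule for $\nabla_x(F(x,u(x)))$, and integration by parts with the vanishing boundary term to convert $\Q s[u]$ into the left-hand side of \eqref{Pohozaev SFL dependence x}. The algebra you give is exactly the paper's.

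There is, however, a genuine gap in the regularity step, and you misidentify where the difficulty actually lies. You assert that elliptic regularity for the SFL gives $u\in H^{\max\{2s,3/2\}}_0(\Omega)$ directly from $f(x,u)\in L^\infty$. That is not true uniformly in $s$. The bootstrap one can actually run is
\begin{equation*}
u\in L^2 \ \Rightarrow\ f(x,u)\in L^2 \ \Rightarrow\ u\in H^{2s} \ \Rightarrow\ f(x,u)\in H^{\min\{1,2s\}} \ \Rightarrow\ \cdots \ \Rightarrow\ u\in H^{1+2s},
\end{equation*}
and the chain stops there because the Lipschitz nonlinearity $v\mapsto f(\cdot,v)$ does not preserve Sobolev regularity above $H^1$. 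Now $1+2s\ge 3/2$ if and only if $s\ge 1/4$. So for $s<1/4$ the solution is \emph{not} known to lie in $H^{3/2}$, Theorem~\ref{Pohozaev SFL} does not apply to $u$ directly, and no amount of elliptic theory fixes this. The approximation argument is therefore not a technical convenience for justifying the divergence theorem up to the boundary, as you frame it; it is the essential device for $s<1/4$. The paper approximates $u$ by $u_n\in H^{3/2}_0(\Omega)$ in the $H^{1+2s}$-topology, applies the theorem to each $u_n$, and passes to the limit using that $\Q s$ is a continuous bilinear form on $H^{1+2s}\times H^{1+2s}$ (note that $H^{1+2s}$ suffices for $\Q s[u]$ to make sense, even if it does not suffice to run the Schur-product proof of the theorem). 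Your write-up conflates the hypothesis under which $\Q s[u]$ is \emph{finite} (which $H^{1+2s}$ gives) with the hypothesis under which the theorem's proof \emph{runs} ($H^{\max\{2s,3/2\}}$), and that is the precise point that has to be untangled for the small-$s$ regime.
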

	
	As a consequence of the above inequality we obtain the following non-existence results:
	
	\begin{corollary}[\bf Non-existence of solutions I] \label{cor: non existence SFL dependence x} Under the assumptions of Proposition \ref{prop:Pohozaev-semilinear}, if
		\begin{equation} \label{ineq non existence SFL dependence x}
			\lp\frac{2s-N}{2}\rp tf(x, t) + NF(x,t) + x\cdot F_{x}(x,t) < 0, \quad \mbox{for all }x\in\Omega, \mbox{and } t\in \R\setminus\{0\},
		\end{equation}
		then problem \eqref{pb SFL dependence x} admits no non-trivial bounded solution.
	\end{corollary}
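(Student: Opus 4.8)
The plan is a short contradiction argument that feeds a hypothetical non-trivial solution into the Pohozaev inequality of Proposition~\ref{prop:Pohozaev-semilinear}. Suppose $u$ is a bounded solution of \eqref{pb SFL dependence x} that does not vanish identically; then the set $E:=\{x\in\Omega:u(x)\neq 0\}$ has positive Lebesgue measure.

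Next I would introduce the pointwise quantity
\[
  g(x):=\lp\frac{2s-N}{2}\rp u(x)\,f(x,u(x)) + N\,F(x,u(x)) + x\cdot F_{x}(x,u(x)),
\]
so that the left-hand side of \eqref{Pohozaev SFL dependence x} is exactly $\int_{\Omega}g\,dx$, and Proposition~\ref{prop:Pohozaev-semilinear} yields $\int_{\Omega}g\,dx\ge 0$. The assumption \eqref{ineq non existence SFL dependence x}, applied with $t=u(x)$, gives $g(x)<0$ for a.e.\ $x\in E$, while $g(x)=0$ for a.e.\ $x\in\Omega\setminus E$: the first two terms vanish at $t=0$ because $F(x,0)=\int_0^0 f(x,t)\,dt=0$ and $f$ is continuous, and the last term vanishes there because $x\mapsto F(x,0)$ is identically zero, hence so is $F_{x}(x,0)$. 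Therefore $g\le 0$ a.e.\ on $\Omega$ with strict inequality on the positive-measure set $E$, so $\int_{\Omega}g\,dx<0$, contradicting $\int_{\Omega}g\,dx\ge 0$. Hence $u\equiv 0$, which is the assertion of the corollary.

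I do not expect a genuine obstacle: all of the analytic substance is already encapsulated in Proposition~\ref{prop:Pohozaev-semilinear} (and, through it, in Theorem~\ref{Pohozaev SFL} and the star-shapedness of $\Omega$, which forces $\Q 1[u]\ge 0$ and hence $\Q s[u]\ge 0$). The only step that deserves an explicit line is the claim that $g$ vanishes on $\{u=0\}$, i.e.\ that $t f(x,t)$, $F(x,t)$ and $x\cdot F_{x}(x,t)$ all tend to $0$ as $t\to 0$; this follows at once from $F(x,0)=0$ together with the (locally Lipschitz, hence continuous) regularity of $f$.
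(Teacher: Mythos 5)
Your proposal is correct and follows essentially the same contradiction argument as the paper: plug the hypothetical non-trivial solution into Proposition~\ref{prop:Pohozaev-semilinear}, invoke the pointwise sign condition \eqref{ineq non existence SFL dependence x}, and contradict the resulting non-negativity of the integral. If anything, your write-up is slightly more careful than the paper's one-line ``and hence'': you explicitly note that the integrand $g$ vanishes on $\{u=0\}$ (since $F(x,0)=0$ and hence $F_x(x,0)=0$) and is strictly negative on the positive-measure set $\{u\neq 0\}$, which is exactly the point needed to upgrade the pointwise strict inequality to strict negativity of the integral.
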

	
	If the non-linearity $f$ does not depend on $x$, then the above result simplifies to:
	
	\begin{corollary}[\bf Non-existence of solutions II] \label{cor: non-existence SFL}
		Under the assumptions of Proposition \ref{prop:Pohozaev-semilinear}, if
		\begin{equation}\label{condition-cor2}
			\int_{0}^{t}f(\tau)\,d\tau < \lp\frac{N-2s}{2N}\rp tf(t), \quad \mbox{for all }t\in \R\setminus\{0\},
		\end{equation}
		then problem \eqref{pb SFL dependence x} admits no non-trivial bounded solution.
	\end{corollary}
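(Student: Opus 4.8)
The plan is to obtain Corollary \ref{cor: non-existence SFL} as the $x$-independent specialization of Corollary \ref{cor: non existence SFL dependence x} (and, equivalently, directly from Proposition \ref{prop:Pohozaev-semilinear}). When $f=f(t)$ does not depend on $x$, its primitive $F(z)=\int_0^z f(t)\,dt$ is also $x$-independent, so $F_x\equiv 0$ and the pointwise hypothesis \eqref{ineq non existence SFL dependence x} collapses to $\lp\tfrac{2s-N}{2}\rp t f(t) + N F(t) < 0$ for all $t\in\R\setminus\{0\}$. Rearranging (moving $NF(t)$ to the right-hand side and dividing by $N$), this is exactly $F(t) < \lp\tfrac{N-2s}{2N}\rp t f(t)$, i.e.\ condition \eqref{condition-cor2}. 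Hence the hypotheses of Corollary \ref{cor: non existence SFL dependence x} are satisfied and its conclusion --- no non-trivial bounded solution --- transfers verbatim.

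For a self-contained argument I would instead start from Proposition \ref{prop:Pohozaev-semilinear}. Given a bounded solution $u$ of \eqref{pb SFL dependence x}: since $\Omega$ is bounded and $f\in C^{0,1}_{loc}$, boundedness of $u$ yields $uf(u),\,F(u)\in L^\infty(\Omega)\subset L^1(\Omega)$, so the integrals below are finite; and since $\Omega$ is star-shaped, $\Q 1[u]\ge 0$ by \eqref{eq: abstract Pohozaev local}, whence $\Q s[u]\ge 0$ by Theorem \ref{Pohozaev SFL} and Proposition \ref{prop:Pohozaev-semilinear} applies. With $F_x\equiv 0$, inequality \eqref{Pohozaev SFL dependence x} reads $\int_\Omega g(u(x))\,dx \ge 0$, where $g(t):=N F(t) - \tfrac{N-2s}{2}\,t f(t)$. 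On the other hand \eqref{condition-cor2} says precisely $g(t)<0$ for $t\ne 0$, while $g(0)=0$; thus $g(u)\le 0$ a.e.\ in $\Omega$, with strict inequality wherever $u\ne 0$. Together with $\int_\Omega g(u)\ge 0$ this forces $g(u)=0$ a.e., hence $u=0$ a.e.\ in $\Omega$, contradicting non-triviality.

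There is no genuinely hard step here: the mathematical content is a substitution followed by a one-line sign argument. The only points requiring a little care are bookkeeping ones --- confirming that boundedness of $u$ on the bounded domain $\Omega$ makes every integral in \eqref{Pohozaev SFL dependence x} absolutely convergent; recording that star-shapedness of $\Omega$ is exactly what supplies $\Q 1\ge 0$ and hence, via Theorem \ref{Pohozaev SFL}, $\Q s\ge 0$; and justifying the final measure-theoretic passage, from $\int_\Omega g(u)\ge 0$ together with $g(u)\le 0$ a.e.\ to $g\circ u\equiv 0$ a.e., and then from $u=0$ a.e.\ to $u\equiv 0$ (immediate in $L^2(\Omega)$, or pointwise using the continuity of bounded solutions). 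I would place this short proof directly after the statement.
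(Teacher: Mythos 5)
Your proposal is correct and matches the paper's approach: the paper derives Corollary \ref{cor: non-existence SFL} in one line by specializing Corollary \ref{cor: non existence SFL dependence x} to $x$-independent $f$, which is precisely your first paragraph, and your algebra converting \eqref{ineq non existence SFL dependence x} with $F_x\equiv 0$ into \eqref{condition-cor2} is accurate. Your second, self-contained argument from Proposition \ref{prop:Pohozaev-semilinear} is also correct and simply unfolds the proof of Corollary \ref{cor: non existence SFL dependence x}, adding the (harmless) extra detail about the measure-theoretic step; it is slightly more than the paper records, but not a different route.
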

	
	A canonical application is the power non-linearity, $f(t)=|t|^{p-1}t$. In the subcritical range $1<p< \frac{N+2s}{N-2s}$, existence of non-trivial solutions can be easily shown by applying a variational mountain pass-argument thanks to the compact embedding $H_{0}^{s}(\Omega)\hookrightarrow L^{p+1}(\Omega)$ (see \cite{Servadei-Valdinoci} for the RFL; the argument for the SFL is analogous). For the supercritical case the above corollary yields:
	
	\begin{corollary}[\bf Non-existence of solutions III] \label{cor: non-existence semilinear pb} Let $\Omega$ be a bounded $C^{1,1}$ star-shaped domain and $p > \frac{N+2s}{N-2s}$. Then the problem
		\begin{equation*}
			\begin{cases}
				(-\Delta|_{\Omega})^{s} u =|u|^{p-1}u, & \mbox{in }\Omega,
				\\
				u = 0, & \mbox{on } \partial \Omega
			\end{cases}
		\end{equation*}
		admits no non-trivial bounded solution.
	\end{corollary}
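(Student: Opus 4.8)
The plan is to deduce this statement directly from Corollary~\ref{cor: non-existence SFL} by verifying its hypothesis \eqref{condition-cor2} for the pure power non-linearity. First I would check that the assumptions of Proposition~\ref{prop:Pohozaev-semilinear} (hence of Corollary~\ref{cor: non-existence SFL}) are met: the domain $\Omega$ is bounded, $C^{1,1}$ and star-shaped by hypothesis, and for the exponent to make sense we need $N>2s$, which also gives $\frac{N+2s}{N-2s}>1$; combined with $p>\frac{N+2s}{N-2s}$ this yields $p>1$, so that $f(t)=|t|^{p-1}t\in C^{1}(\R)\subset C^{0,1}_{loc}(\R)$, and as an $x$-independent non-linearity it lies in $C^{0,1}_{loc}(\overline{\Omega}\times\R)$.

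Next I would compute the relevant quantities. Since $f$ is $x$-independent, $F(t)=\int_0^t|\tau|^{p-1}\tau\,d\tau=\frac{|t|^{p+1}}{p+1}$ and $tf(t)=|t|^{p+1}$. Substituting into \eqref{condition-cor2}, the inequality to be verified is, for all $t\in\R\setminus\{0\}$,
\[
\frac{|t|^{p+1}}{p+1}<\left(\frac{N-2s}{2N}\right)|t|^{p+1}.
\]
Dividing by $|t|^{p+1}>0$, this is equivalent to $\frac{1}{p+1}<\frac{N-2s}{2N}$, i.e.\ to $2N<(N-2s)(p+1)$, i.e.\ to $p>\frac{2N}{N-2s}-1=\frac{N+2s}{N-2s}$, which is exactly the standing hypothesis, and the inequality is strict for every $t\neq 0$. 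Hence \eqref{condition-cor2} holds and Corollary~\ref{cor: non-existence SFL} applies, yielding that the problem admits no non-trivial bounded solution.

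I do not anticipate any genuine obstacle: once Corollary~\ref{cor: non-existence SFL} is available, the argument is a one-line algebraic reduction. The only points deserving a brief remark are (i) that $p>1$ — and hence the local Lipschitz regularity of $f$ — is automatic from $p>\frac{N+2s}{N-2s}>1$, and (ii) that the borderline exponent $p=\frac{N+2s}{N-2s}$ is excluded precisely because it turns the strict inequality in \eqref{condition-cor2} into an equality, so the non-existence mechanism of Corollary~\ref{cor: non-existence SFL} no longer forces $u\equiv 0$ (consistently with the existence of non-trivial solutions in the subcritical and critical ranges).
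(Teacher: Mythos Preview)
Your proof is correct and follows exactly the approach indicated in the paper, which simply says that Corollary~\ref{cor: non-existence semilinear pb} follows from Corollary~\ref{cor: non-existence SFL} by specializing to the power nonlinearity $f(t)=|t|^{p-1}t$; you have just made the algebraic verification explicit. One minor inaccuracy in your closing parenthetical remark: for the SFL the critical exponent case is in fact open (as the paper notes), so you should not assert existence of non-trivial solutions in the critical range.
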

	
	The critical value $p=\frac{N+2s}{N-2s}$ is quite delicate and Corollary \ref{cor: non-existence SFL} cannot be used, since equality in condition \eqref{condition-cor2} only implies that $\Q s[u]= 0$, which is insufficient a priori to establish non-existence. Whether non-trivial solutions exist at this critical exponent remains open, in contrast to the local case, where the Pohozaev identity combined with the Hopf lemma yields non-existence in star-shaped domains.
	
	\subsection{The Spectral Fractional Laplacian and the Restricted Fractional Laplacian}
	
	In the study of diffusion processes, the classical heat equation with the Laplacian operator corresponds to Brownian motion, following Einstein's work \cite{Einstein}. When modeling phenomena involving Lévy flights—processes that allow for jumps of arbitrary length—one can subordinate the Brownian motion to obtain a jump process. The corresponding diffusion operator is a fractional power of the Laplacian, which gives rise to integro-differential equations; see \cite{RosOton2024IntDiffFernandezReal} for details on applications.
	
	In the Euclidean space, there are many different ways to define the Fractional Laplacian: through Fourier transform, through the heat semigroup and spectral analysis, using integral operators with hypersingular kernels, via the Caffarelli-Silvestre extension, etc. All these a priori different definitions of the Fractional Laplacian turn out to be equivalent in $\RN$. However, when dealing with bounded domains, most definitions give rise to different operators.
	
	Indeed, on the whole space, killing and subordination commute when applied to the Brownian motion. However, when dealing with bounded domains, the two operations do not commute and generate two different operators. By first killing (i.e. considering the Dirichlet Laplacian) and then subordinating (i.e. taking the spectral power), we obtain the SFL. For more details on the SFL and its applications to killed jump processes, see \cite{SV2003, SV2008}, and also \cite{AbTh, bonforte2018sharp, bonforte2017sharp, bonforte2014existence} and references therein.
	
	By first subordinating and then killing the process, we obtain the RFL. In order to evaluate this operator, one needs to know the values of the function outside $\Omega$, where it is set to zero. This justifies the terminology ``restricted'' and models the ``killed upon landing'' jump process. See \cite{RosOton2024IntDiffFernandezReal} and references therein for more details and applications.
	
	The SFL, already defined in Section \ref{section:main} via the spectral sum, admits an equivalent representation through the heat semigroup:
	\begin{equation*}
		(-\Delta|_{\Omega})^{s}u = \sum_{j=1}^{\infty}\lambda_{j}^{s}\hat{u}_{j}\varphi_{j} = \frac{1}{\Gamma(-s)}\int_{0}^{\infty} \lb e^{t\Delta|_{\Omega}}u-u \rb \frac{dt}{t^{1+s}},
	\end{equation*}
	where $\Gamma$ is the Gamma function. The SFL can also be represented using a hypersingular kernel:
	\begin{equation*}
		(-\Delta|_{\Omega})^{s}u(x) = \mbox{P.V.} \int_{\Omega} \lb u(x)-u(y) \rb J_{\Omega}(x,y)\,dy + K_{\Omega}(x) u(x)\,,
	\end{equation*}
	where $\mbox{P.V.}$ denotes the Cauchy principal value,
	\begin{equation*}
		K_{\Omega}(x)\asymp \dist x^{-2s},\qquad\mbox{with}\qquad \dist x:=\underset{x\in\Omega}{\min}\{|x-y|\,:\, y\in\partial\Omega\},
	\end{equation*}
	and we write $a \asymp b$ to denote that there exist constants $c, C>0$ such that $cb\leq a \leq Cb$. The jumping kernel $J_{\Omega}$ is a singular symmetric compactly supported kernel that degenerates at the boundary $\partial\Omega$. More precisely, we have the following estimates:
	\begin{equation}\label{kernel}
		J_{\Omega}(x,y) \asymp \frac{1}{|x-y|^{N+2s}} \, \min \lsb \frac{\dist x}{|x-y|},1\rsb \, \min \lsb \frac{\dist y}{|x-y|},1\rsb .
	\end{equation}
	We refer to \cite{SV2003,AbTh} for a proof of the above representation and estimates and for further details.
	
	In contrast, the RFL on domains shares the same kernel as the Fractional Laplacian on $\RN$, but is defined only for (restricted to) functions supported in $\overline{\Omega}$, more precisely:
	\begin{equation*}
		(-\Delta)^{s}u(x)=C_{N,s} \mbox{P.V.} \int_{\RN}\frac{u(x)-u(y)}{|x-y|^{N+2s}}\,dy,\qquad\mbox{whenever }{\rm supp}(u)\subseteq\overline{\Omega}\,,
	\end{equation*}
	where $C_{N,s}$ is a normalization constant given in \cite[Proposition A.1]{ros2014pohozaev}. Note that unlike the SFL kernel \eqref{kernel}, the RFL kernel is translation and rotation invariant, and homogeneous.
	
	Finally, both operators admit representations as Dirichlet-to-Neumann operators for extension problems. For the RFL, this is the Caffarelli-Silvestre extension \cite{Caffarelli-Silvestre}. For the SFL, such extensions were constructed by Cabr\'{e} and Tan \cite{Cabre-Tan} for the half-Laplacian, and by Br\"andle, Colorado, de Pablo and S\'{a}nchez \cite{Brandle-Colorado-dePablo-Sanchez}.
	
	\subsection{Known results on the Pohozaev identity}
	
	The Pohozaev identity in the local case was first proved by Stanislav I. Pohozaev in his 1965 paper \cite{pohozaev1965eigenfunctions},
	
	\begin{theorem}\label{Thm Pohozaev local} \rm (\textbf{Pohozaev identity for the Laplacian} \cite[Theorem 5.1]{quittner2019superlinear})
		Let $N\geq 3$ and let $u\in C^{2}(\overline{\Omega})$ satisfy $u=0$ on $\partial\Omega$. Then
		\begin{equation*}
			\Q 1[u]:=\lp\frac{2-N}{2}\rp \int_{\Omega} u(-\Delta) u\,dx - \int_{\Omega}(x\cdot \nabla u)(-\Delta)u\,dx = \frac{1}{2} \int_{\partial \Omega}\left|\frac{\partial u}{\partial\nu}\right|^{2}(x \cdot \nu)\,dS,
		\end{equation*}
		where $\nu$ is the outward normal and $dS$ is the surface measure. If $\Omega$ is star-shaped, $\Q 1[u]\ge 0$.
	\end{theorem}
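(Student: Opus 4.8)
This is the classical Rellich--Pohozaev identity, so the plan is to recall its one-line derivation from a vector-field computation. The idea is to introduce the multiplier field
\[
X := (x\cdot\nabla u)\,\nabla u - \frac12\,|\nabla u|^2\,x
\]
and compute its divergence. Expanding $\operatorname{div}\big[(x\cdot\nabla u)\nabla u\big]$ via $\nabla(x\cdot\nabla u)=\nabla u+(x\cdot\nabla)\nabla u$, together with $\operatorname{div}\big[|\nabla u|^2 x\big]=N|\nabla u|^2+x\cdot\nabla|\nabla u|^2$, one sees that the two copies of $\tfrac12\,x\cdot\nabla|\nabla u|^2$ cancel, leaving the pointwise identity
\[
\operatorname{div} X=(x\cdot\nabla u)\,\Delta u+\frac{2-N}{2}\,|\nabla u|^2\qquad\text{in }\Omega.
\]
This cancellation is the only real computation, and it is where the dimensional constant $\tfrac{2-N}{2}$ comes from.

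Next I would integrate this identity over $\Omega$ and apply the divergence theorem (legitimate since $u\in C^2(\overline\Omega)$ and $\partial\Omega$ is $C^1$), which produces the boundary integral $\int_{\partial\Omega}\big[(x\cdot\nabla u)(\partial_\nu u)-\tfrac12(x\cdot\nu)|\nabla u|^2\big]\,dS$ on the left-hand side. Here I use the Dirichlet condition: since $u\equiv0$ on $\partial\Omega$ its tangential gradient vanishes there, so $\nabla u=(\partial_\nu u)\,\nu$ on $\partial\Omega$, and the boundary integrand collapses to $\tfrac12|\partial_\nu u|^2(x\cdot\nu)$. For the interior term I would integrate by parts once more --- the boundary contribution again killed by $u|_{\partial\Omega}=0$ --- to rewrite $\int_\Omega|\nabla u|^2\,dx=\int_\Omega u(-\Delta u)\,dx$, and also note $\int_\Omega(x\cdot\nabla u)\Delta u\,dx=-\int_\Omega(x\cdot\nabla u)(-\Delta u)\,dx$. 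Substituting and rearranging then gives exactly
\[
\frac12\int_{\partial\Omega}\Big|\frac{\partial u}{\partial\nu}\Big|^2(x\cdot\nu)\,dS=\Big(\frac{2-N}{2}\Big)\int_\Omega u(-\Delta u)\,dx-\int_\Omega(x\cdot\nabla u)(-\Delta u)\,dx=\Q 1[u].
\]

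Finally, for the star-shapedness statement: if $\Omega$ is star-shaped with respect to the origin, then $x\cdot\nu\ge0$ on $\partial\Omega$, so the surface integral is nonnegative and hence $\Q 1[u]\ge0$. I do not expect any genuine obstacle here --- the result is a textbook identity --- the only point requiring a little care being the assertion that a $C^1$ function vanishing on a $C^1$ hypersurface has purely normal gradient along it, which is standard and is exactly the place where the Dirichlet condition converts the abstract bilinear boundary term into the nonnegative quantity $\tfrac12\int_{\partial\Omega}|\partial_\nu u|^2(x\cdot\nu)\,dS$.
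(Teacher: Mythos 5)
Your proof is correct: the divergence computation $\operatorname{div} X=(x\cdot\nabla u)\Delta u+\tfrac{2-N}{2}|\nabla u|^2$ checks out, the boundary integrand indeed collapses via $\nabla u=(\partial_\nu u)\nu$ on $\partial\Omega$, and the final rearrangement gives exactly $\Q 1[u]$. The paper does not prove this statement itself --- it is quoted verbatim from \cite[Theorem 5.1]{quittner2019superlinear} --- so there is no in-paper argument to compare against; your Rellich--Pohozaev multiplier-field derivation is the standard textbook proof and is essentially what that reference gives.
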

	
	For the classical Laplacian, the term $\frac{\partial u}{\partial \nu}$ in the identity arises naturally from integration by parts and plays a fundamental role in relating the behavior of the solution inside the domain to its boundary conditions.
	
	The spectral half-Laplacian can be expressed as the Dirichlet-to-Neumann operator for a harmonic extension with Dirichlet conditions on the half cylinder $\partial\Omega\times \mathbb R_+$. It therefore inherits a Pohozaev identity from that of the Laplacian in $\Omega\times\mathbb R_+$, as shown by Tan \cite{Tan-BN problem} for the half-SFL. However, the boundary term in this identity is defined on $\partial\Omega\times\mathbb R_+$ rather than just on $\partial\Omega$, since it depends on the extension.
	
	In the genuine non-local setting, the first formulation of the Pohozaev identity was obtained by Xavier Ros-Oton and Joaquim Serra in their celebrated 2014 article \cite{ros2014pohozaev} for the RFL, see formula \eqref{Pohozaev-ROS} below. Let us state the main result of \cite{ros2014pohozaev} in detail, to highlight the role of boundary regularity. 
	
	\begin{theorem}[Proposition 1.6 of \cite{ros2014pohozaev}]\label{theorem:RS}
		Let $\Omega$ be a bounded $C^{1,1}$ domain. Assume that $u$ is a $H^{s}\lp\RN\rp$ function which vanishes in $\RN \backslash \Omega$, and satisfies
		\begin{enumerate}
			\item $u \in C^{s}\lp\RN\rp$ and, for every $\beta \in[s, 1+2 s)$, $u$ is of class $C^{\beta}(\Omega)$ and
			\begin{equation*}
				[u]_{C^{\beta}(\{x \in \Omega\,|\, \dist x \geqq \rho\})} \leqq C \rho^{s-\beta}, \qquad \mbox {for all } \rho \in(0,1).
			\end{equation*}
			\item The function $\frac{u}{\rd^{s}}\big|_{\Omega}$ can be continuously extended to $\overline{\Omega}$. Moreover, there exists $\alpha \in(0,1)$ such that $\frac{u}{\rd^{s}} \in C^{\alpha}(\overline{\Omega})$. In addition, for all $\beta \in[\alpha, s+\alpha]$, the following estimate holds
			\begin{equation*}
				\lb \frac{u}{\rd^{s}} \rb_{C^{\beta}(\{x \in \Omega\, |\, \dist x \geqq \rho\})} \leqq C \rho^{\alpha-\beta}, \qquad \mbox{for all } \rho \in(0,1).
			\end{equation*}
			\item $(-\Delta)^{s} u$ is pointwise bounded in $\Omega$.
		\end{enumerate}
		Then, the following identity holds
		\begin{equation}\label{Pohozaev-ROS}
			\lp\frac{2s-N}{2}\rp \int_{\Omega} u(-\Delta)^{s} u \,dx - \int_{\Omega}(x \cdot \nabla u)(-\Delta)^{s} u \, dx = \frac{\Gamma(1+s)^{2}}{2} \int_{\partial \Omega}\lp\frac{u}{\rd^{s}}\rp^{2}(x \cdot \nu) \, dS.
		\end{equation}
	\end{theorem}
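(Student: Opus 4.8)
The plan is to exploit the scaling homogeneity of $(-\Delta)^{s}$ on $\RN$ together with a careful \emph{one-sided} differentiation in a dilation parameter; the boundary term in \eqref{Pohozaev-ROS} will emerge as the limit of an integral over the thin region that a rescaled copy of $u$ occupies outside $\Omega$. Since both sides of \eqref{Pohozaev-ROS} are local near $\partial\Omega$, I would first reduce, by a localization argument, to the case in which $\Omega$ is strictly star-shaped with respect to the origin, so that $\overline{\Omega/\lambda}\subset\Omega$ for $\lambda>1$. For $\lambda>0$ set $u_{\lambda}(x):=u(\lambda x)$ and
\[
I(\lambda):=\int_{\RN}u_{\lambda}\,(-\Delta)^{s}u\,\rd x .
\]
Because $u,u_{\lambda}\in H^{s}(\RN)$, this integral coincides with the Gagliardo bilinear form $\mathcal E(u_{\lambda},u)=\tfrac{C_{N,s}}{2}\iint\frac{(u_{\lambda}(x)-u_{\lambda}(y))(u(x)-u(y))}{|x-y|^{N+2s}}\,\rd x\,\rd y$, and the substitution $(x,y)\mapsto(x/\lambda,y/\lambda)$ in the double integral, together with the symmetry of $\mathcal E$, gives the functional equation
\[
I(\lambda)=\lambda^{2s-N}\,I(1/\lambda),\qquad \lambda>0 .
\]

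Next I would show that $I$ admits one-sided derivatives at $\lambda=1$. For $\lambda>1$ the support of $u_{\lambda}$ lies inside $\Omega$, where $(-\Delta)^{s}u$ is bounded by hypothesis (3); since $|\nabla u|\lesssim\rd^{\,s-1}\in L^{1}(\Omega)$ by (1), differentiation under the integral sign gives $I'(1^{+})=\int_{\Omega}(x\cdot\nabla u)(-\Delta)^{s}u\,\rd x$. For $\lambda<1$ the support of $u_{\lambda}$ additionally covers the thin shell $S_{\lambda}:=(\Omega/\lambda)\setminus\overline{\Omega}$, on which $(-\Delta)^{s}u$ blows up like $\rd^{-s}$; splitting the integral accordingly one obtains
\[
I'(1^{-})=\int_{\Omega}(x\cdot\nabla u)(-\Delta)^{s}u\,\rd x+B,\qquad B:=\lim_{\lambda\to1^{-}}\int_{S_{\lambda}}\bigl(x\cdot(\nabla u)(\lambda x)\bigr)(-\Delta)^{s}u(x)\,\rd x .
\]
Differentiating the functional equation at $\lambda=1$ while tracking the sides (as $\lambda\to1^{+}$ the argument $1/\lambda$ tends to $1^{-}$) yields $I'(1^{+})+I'(1^{-})=(2s-N)\,I(1)$, hence $2\,I'(1^{+})+B=(2s-N)I(1)$, which rearranges into
\[
\Bigl(\tfrac{2s-N}{2}\Bigr)\int_{\Omega}u\,(-\Delta)^{s}u\,\rd x-\int_{\Omega}(x\cdot\nabla u)(-\Delta)^{s}u\,\rd x=\tfrac12\,B .
\]

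It then remains to compute $B$, which is where the sharp boundary behavior enters. By hypothesis (2), near $\bar x\in\partial\Omega$ one has $u(z)=(u/\rd^{s})(\bar x)\,\rd(z)^{s}+o(\rd(z)^{s})$, so $\nabla u(z)\approx-s\,(u/\rd^{s})(\bar x)\,\rd(z)^{s-1}\nu(\bar x)$; inserting the same expansion into the representation $(-\Delta)^{s}u(x)=-C_{N,s}\int_{\Omega}u(y)\,|x-y|^{-N-2s}\,\rd y$ (valid for $x\notin\overline{\Omega}$) and integrating out the tangential variables gives the outer asymptotics $(-\Delta)^{s}u(x)\approx-\tfrac{\Gamma(1+s)}{\Gamma(1-s)}\,(u/\rd^{s})(\bar x)\,\rd(x)^{-s}$ for $x$ just outside $\partial\Omega$. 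Parametrizing $S_{\lambda}$ by $\bar x\in\partial\Omega$ and the outer distance $\tau=\rd(x)$, with shell width $h(\bar x)\approx(1-\lambda)(\bar x\cdot\nu)$, $\rd(\lambda x)\approx h(\bar x)-\tau$, $x\cdot\nu\approx\bar x\cdot\nu$, and volume element $\rd S(\bar x)\,\rd\tau$, the inner integral reduces to the Beta integral $\int_{0}^{h}(h-\tau)^{s-1}\tau^{-s}\,\rd\tau=\Gamma(s)\Gamma(1-s)$; collecting the constants ($s\,\Gamma(1+s)\Gamma(s)=\Gamma(1+s)^{2}$) together with the geometric factor $\bar x\cdot\nu$ produces
\[
B=\Gamma(1+s)^{2}\int_{\partial\Omega}\Bigl(\frac{u}{\rd^{s}}\Bigr)^{2}(x\cdot\nu)\,\rd S ,
\]
and substituting $\tfrac12 B$ into the previous display reproduces \eqref{Pohozaev-ROS} exactly.

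The hard part is the rigorous justification of the computation of $B$: one must show that the limit defining it localizes both near $\partial\Omega$ and near the leading profile $u\sim(u/\rd^{s})\rd^{s}$, with all remainder terms — those coming from the expansion of $u$, controlled by the Hölder bound on $u/\rd^{s}$ and the weighted derivative estimates of hypotheses (1)--(2), and those coming from the curvature of $\partial\Omega$ and the slant of the shell — vanishing as $\lambda\to1^{-}$. The bookkeeping of exponents is delicate because the relevant integrals are only borderline convergent ($\rd^{s-1}$ against $\rd^{-s}$, both exponents lying in $(-1,0)$). A secondary, more routine point is the uniform-in-$\lambda$ domination needed to differentiate $I$ from each side, and the initial localization reducing a general $C^{1,1}$ domain to the star-shaped model must be done with some care, since the vector field $x$ is not translation invariant.
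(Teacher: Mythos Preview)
This theorem is not proved in the present paper: it is quoted verbatim as Proposition~1.6 of \cite{ros2014pohozaev} in order to contrast the RFL situation with the SFL, which is the actual subject of the paper. There is therefore no proof here to compare your attempt against.

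That said, your sketch follows the original Ros-Oton--Serra strategy closely, and the heuristics are sound: the dilation $u_\lambda$, the functional equation $I(\lambda)=\lambda^{2s-N}I(1/\lambda)$ coming from the homogeneity of the Gagliardo bilinear form, the resulting relation $I'(1^{+})+I'(1^{-})=(2s-N)I(1)$, and the emergence of the boundary term from the thin outer shell are exactly the right ingredients. The one genuine difference in execution is that Ros-Oton and Serra do not analyze the blow-up of $(-\Delta)^{s}u$ in the exterior shell directly as you propose; instead they pass to $w=(-\Delta)^{s/2}u$ on $\RN$, rewrite $I(\lambda)=\lambda^{s-N/2}\int_{\RN} w_{\sqrt\lambda}\,w_{1/\sqrt\lambda}$, and extract the boundary contribution from the precise two-sided asymptotics of $w$ across $\partial\Omega$ (a logarithmic profile in the distance function). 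Your variant --- reading off the constant $\Gamma(1+s)^{2}$ from a Beta integral after inserting the outer expansion of $(-\Delta)^{s}u$ --- is a legitimate alternative and arguably more transparent; the delicate points you yourself flag (the borderline integrability of $\rd^{\,s-1}$ against $\rd^{-s}$, the uniform-in-$\lambda$ domination needed to differentiate $I$, and the reduction of a general $C^{1,1}$ domain to a locally star-shaped model) are precisely the technical obstacles that occupy the bulk of \cite{ros2014pohozaev}.
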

	%
	
	Ros-Oton and Serra showed in \cite{ros2014dirichlet} that solutions of semilinear problems for the RFL indeed have the right regularity to apply Theorem \ref{theorem:RS}. More precisely:
	
	\begin{theorem}[Theorem 1.1 of \cite{ros2014pohozaev}]
		Let $\Omega$ be a bounded $C^{1,1}$ domain, $f$ be a locally Lipschitz function and $u$ be a bounded solution of
		\begin{equation*}
			\begin{cases}
				(-\Delta)^{s} u = f(u), & \mbox{in }\Omega,
				\\
				u = 0, & \mbox{in } \RN\setminus \Omega.
			\end{cases}
		\end{equation*}
		Then $\frac{u}{\rd^{s}}\big|_{\Omega}\in C^{\alpha}(\overline{\Omega})$, for some $\alpha\in(0,1)$, meaning that $\frac{u}{\rd^{s}}\big|_{\Omega}$ has a H\"older continuous extension to $\overline{\Omega}$, and the following identity holds
		\begin{equation*}
			\lp\frac{2s-N}{2}\rp\int_{\Omega} u f(u)\,dx + N \int_{\Omega} F(u)\,dx = \frac{\Gamma(1+s)^{2}}{2}\int_{\partial\Omega} \lp\frac{u}{\rd^{s}}\rp^{2}(x\cdot \nu)\, dS,
		\end{equation*}
		where $F(z)=\int_{0}^{z}f(t)\,dt$.
	\end{theorem}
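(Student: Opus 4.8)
The plan is to derive this identity from the abstract non-local Pohozaev identity of Theorem~\ref{theorem:RS}, combined with the sharp boundary regularity theory for the Dirichlet problem for the RFL from \cite{ros2014dirichlet}, and to finish with an elementary integration by parts on the left-hand side.

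First I would verify that $u$ satisfies hypotheses (1)--(3) of Theorem~\ref{theorem:RS}. Since $f$ is locally Lipschitz and $u$ is bounded, $g:=f(u)\in L^{\infty}(\Omega)$, so the Dirichlet regularity theory of \cite{ros2014dirichlet} gives $u\in C^{s}(\RN)$ and that $u/\rd^{s}\big|_{\Omega}$ extends to a function of class $C^{\alpha}(\overline{\Omega})$ for some $\alpha\in(0,1)$, which is already the first assertion of the theorem. A bootstrap then upgrades this: once $u\in C^{s}$, the composition $g=f(u)$ gains Hölder regularity, and the interior and boundary Schauder-type estimates for $(-\Delta)^{s}$ in \cite{ros2014dirichlet} yield the weighted bounds $[u]_{C^{\beta}(\{\dist x\geq\rho\})}\leq C\rho^{s-\beta}$ for all $\beta\in[s,1+2s)$ and $[u/\rd^{s}]_{C^{\beta}(\{\dist x\geq\rho\})}\leq C\rho^{\alpha-\beta}$ for all $\beta\in[\alpha,s+\alpha]$, while $(-\Delta)^{s}u=f(u)\in L^{\infty}(\Omega)$ settles (3).

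Next I would apply Theorem~\ref{theorem:RS} to $u$ and substitute $(-\Delta)^{s}u=f(u)$. The first term becomes $\lp\frac{2s-N}{2}\rp\int_{\Omega}uf(u)\,dx$. For the second term, writing $(x\cdot\nabla u)f(u)=x\cdot\nabla\bigl(F(u)\bigr)$ and integrating by parts gives
\begin{equation*}
\int_{\Omega}(x\cdot\nabla u)f(u)\,dx=\int_{\Omega}x\cdot\nabla\bigl(F(u)\bigr)\,dx=-N\int_{\Omega}F(u)\,dx+\int_{\partial\Omega}(x\cdot\nu)F(u)\,dS.
\end{equation*}
Since $u=0$ on $\partial\Omega$ and $F(0)=0$, the boundary integral vanishes, so $\int_{\Omega}(x\cdot\nabla u)f(u)\,dx=-N\int_{\Omega}F(u)\,dx$. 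Combining the two terms with the unchanged right-hand side $\frac{\Gamma(1+s)^{2}}{2}\int_{\partial\Omega}(u/\rd^{s})^{2}(x\cdot\nu)\,dS$ of Theorem~\ref{theorem:RS} yields the claimed identity.

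The main obstacle is the regularity step: one must check that a merely bounded solution of the semilinear problem actually enjoys the full family of weighted Hölder estimates demanded by Theorem~\ref{theorem:RS}, not only $u/\rd^{s}\in C^{\alpha}(\overline{\Omega})$. This relies on the delicate boundary regularity and bootstrap arguments of \cite{ros2014dirichlet}, which track how the Hölder exponent of $f(u)$ propagates back to $u$ near $\partial\Omega$; once these estimates are in hand, the rest of the argument is routine.
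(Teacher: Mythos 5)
This statement is not proved in the paper at all: it is quoted verbatim as Theorem~1.1 of \cite{ros2014pohozaev} and serves only as background for the introduction, so there is no in-paper proof to compare against. Your reconstruction of the argument is essentially the one used by Ros-Oton and Serra themselves: establish from the boundary regularity theory of \cite{ros2014dirichlet} that a bounded solution of the semilinear RFL Dirichlet problem satisfies hypotheses (1)--(3) of the abstract Pohozaev identity (Proposition~1.6, restated here as Theorem~\ref{theorem:RS}), then substitute $(-\Delta)^s u = f(u)$ and integrate by parts in the drift term. Two points worth making explicit to close the argument cleanly: the passage from $\int_\Omega x\cdot\nabla\bigl(F(u)\bigr)\,dx$ to $-N\int_\Omega F(u)\,dx$ uses that $F(u)\in W^{1,1}(\Omega)$, which follows because $\nabla F(u)=f(u)\nabla u$ with $f(u)$ bounded and $|\nabla u|\lesssim \dist{x}^{s-1}$ integrable by the $\beta=1$ case of the weighted estimate in hypothesis (1); and the vanishing of the boundary term uses $F(0)=0$, which is automatic from the definition $F(z)=\int_0^z f(t)\,dt$. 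The bootstrap you invoke (Lipschitz $f$ composed with $C^s$ solution gives $C^s$ right-hand side, hence interior Schauder gains) is precisely the mechanism in \cite{ros2014dirichlet} that upgrades the a priori $L^\infty$ information to the full weighted H\"older scale; so the proposal is correct and aligned with the original source.
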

	
	In their original proof in \cite{ros2014pohozaev}, Ros-Oton and Serra used the regularity of the solution up to the boundary to obtain a boundary term involving $u/\rd^{s}$. This term serves as the non-local counterpart for $\frac{\partial u}{\partial \nu}$ in the local case.
	Nevertheless, in our setting solutions are Lipschitz continuous and the approach in \cite{ros2014pohozaev} does not recover any boundary term.
	

	\subsection{Why the Ros-Oton–Serra proof strategy fails for the SFL}
	
	The SFL differs fundamentally from the RFL in ways that prevent adapting the Ros-Oton–Serra approach \cite{ros2014pohozaev}. First, the SFL lacks the translation and dilation invariance that the RFL inherits from the fractional Laplacian on $\RN$; these symmetries play a crucial role in \cite{ros2014pohozaev}. Second, the boundary regularity differs markedly between the two operators. The main reason for this difference is that the kernel of the SFL degenerates at the boundary (recall \eqref{kernel}), while the RFL kernel does not. See \cite[Sections 2A and 10A]{bonforte2017sharp}, \cite{bonforte2018sharp} and references therein for further details on boundary estimates and regularity theory.
	
	In the semilinear case with power non-linearity $|u|^{p-1}u$, the boundary behaviour of $u$ is dictated by the first eigenfunction:
	$u(x)\asymp {\rm d}(x)$. This is in stark contrast to the RFL case, where $u\in C^s(\overline{\Omega})$ and $u(x)\asymp {\rm d}^{s}(x)$.
	
	To provide broader context, the RFL and SFL also exhibit distinct boundary behavior in the linear setting. Solutions to the Dirichlet problem $Lu=f$ are bounded when $f\in L^p(\Omega)$ with $p>N/2s$, both for $L = (-\Delta)^s$ (RFL) and for $L = (-\Delta|_{\Omega})^s$ (SFL). Bounded solutions of the RFL enjoy optimal $C^{s}(\overline{\Omega})$ regularity (see \cite{ros2014dirichlet}), and this holds in particular for the eigenfunctions, which also satisfy the boundary estimates $ \varphi_1(x)\asymp {\rm d}^s(x)$ and $| \varphi_k(x)|\lesssim {\rm d}^s(x)$. On the other hand, the eigenfunctions of the SFL are the same as those of the classical Dirichlet Laplacian. Hence they can be as regular as the domain allows (up to $C^\infty$), and in particular they are at least Lipschitz in $C^{1,1}$ domains, with $ \varphi_1(x)\asymp {\rm d}(x)$ and $| \varphi_k(x)|\lesssim {\rm d}(x)$.
	
	When $s>1/2$, the first eigenfunction dominates the boundary behaviour of bounded solutions, as in the case of the RFL. However, when $s\le 1/2$, the boundary behaviour may differ due to the degeneracy of the kernel and the nature of the forcing term. To illustrate this, consider three representative cases:
	
	\begin{itemize}
		\item The eigenvalue problem. The solution to $(-\Delta|_{\Omega})^{s}u = \lambda u$ with $u=0$ on $\partial\Omega$ satisfies $u(x)\asymp {\rm d}(x)$ for all $s\in(0,1)$.
		
		\item The case $f=0$ at the boundary. When the forcing term vanishes at the boundary, solutions inherit the behaviour of the eigenfunctions: $u(x)\asymp {\rm d}(x)$.
		
		\item The case $f=1$ (stopping time problem). When the forcing term does not vanish at the boundary, the situation changes for $s\le 1/2$. Solutions behave like ${\rm d}(x)(1 + |\log({\rm d}(x))|)$ when $s=1/2$ and as ${\rm d}(x)^{2s}$ when $s\in (0,1/2)$.
	\end{itemize}
	
	The boundary behaviour and regularity of bounded solutions of the SFL when $s\le 1/2$ is a delicate issue that remains only partially understood. These differences in boundary behaviour are one of the main obstacles to proving Pohozaev identity for the SFL following the approach of Ros-Oton and Serra in \cite{ros2014dirichlet,ros2014pohozaev}. They also raise the intriguing question of optimal boundary regularity for the SFL when $s\le 1/2$.
	
	\subsection{A bit of history}
	
	In 1943, Rellich \cite{rellich1943asymptotische} established a preliminary version of \eqref{eq: abstract Pohozaev local} for the particular case ${f = \frac{1}{2}\lambda u^{2}}$. Later, in 1961, Morawetz used similar techniques in spirit to Pohozaev’s method to get integral identities and inequalities for the wave equation \cite{morawetz1961decay}.
	
	In 1965, Pohozaev \cite{pohozaev1965eigenfunctions} discovered the identity for the standard Laplacian which now carries his name, providing a fundamental tool in the study of semilinear elliptic problems and, in particular, allowing one to prove sharp non-existence results in supercritical semilinear problems. Some classical applications were mentioned above and include the non-existence results by Pucci and Serrin \cite{Pucci-Serrin-variational} and the study of the perturbed problem by Br\'ezis and Nirenberg \cite{BrezisPositiveSolu1983Nirenberg} in star-shaped domains (see also the earlier work by Moser and Rabinowitz \cite{Moser1974VariationalRabinowitz}).
	
	Since then, a number of generalizations have been obtained in the local setting. For instance, in 2010 Bozhkov and Mitidieri developed extensions of the Rellich-Pohozaev identities using Noether-type approximation techniques \cite{boz2010Noether}. For systems, Van der Vorst \cite{VanderVorst} generalized Pucci-Serrin's results. Still in the elliptic framework, Pohozaev identities can show uniqueness for problems where a non-trivial solution exists \cite{Dolbeault-Stanczy}. They also may be used to study blow-up behavior near critical exponents (Br\'{e}zis-Peletier conjecture) \cite{Brezis-Peletier,Han-asymptotic,Rey}. Additionally, in the paper \cite{Serra:radial-symmetry}, Serra used the method of Lions in \cite{PLLions} that combines an isoperimetric inequality with a Pohozaev identity in order to prove radial symmetry of solutions and obtain an alternative proof to the classical result by Gidas-Ni-Nirenberg, see \cite{Gidas-Ni-Nirenberg}. More general elliptic PDEs with associated Pohozaev identities include models for nematic liquid crystals \cite{ChouZhu} and the anisotropic $p$-Laplacian, already considered by Pucci and Serrin in \cite{Pucci-Serrin-variational}.
	
	In geometric analysis, Pohozaev-type identities arise as conservation laws related to geometric structures \cite{Riviere}. Notable examples include the Pohozaev-Schoen formula in \cite{Schoen-existence}, written in terms of a conformal killing vector field on a manifold with boundary, which is used in the study of the Yamabe problem in conformal geometry. Pohozaev-type identities have also been considered in the context of harmonic maps, exploiting the scaling properties of the stress-energy tensor \cite{Schoen:harmonic-map, Schoen-Uhlenbeck}.
	
	Beyond elliptic equations, in the study of dispersive PDE, Pohozaev-type identities are known as Morawetz identities (following the work of Morawetz \cite{morawetz1961decay,Morawetz2}) or, more generally, virial identities that appear due to invariance under scaling (see, for example, the recent lecture notes on the non-linear Schr\"odinger equation \cite{Cazenave-Schrodinger} for a thorough account of the subject).
	
	The spectral half-Laplacian in $\Omega$ may be defined using the Caffarelli-Silvestre extension of \cite{Caffarelli-Silvestre} (see also \cite{Brandle-Colorado-dePablo-Sanchez,Cabre-Tan}) with Dirichlet conditions on $\partial\Omega$. As the associated extension is harmonic in $\Omega\times\mathbb R_+$, the half-Laplacian inherits a Pohozaev identity from that of the Laplacian, written in terms of the extension variable, as proved by Tan in \cite{Tan-BN problem} in 2011. Note that the boundary integral in this expression is defined on the whole cylinder $\partial\Omega\times \mathbb R_+$.
	
	The first Pohozaev identity that was genuinely non-local (meaning, with a boundary term defined only on $\partial\Omega$ and not on the extension) was derived by Ros-Oton and Serra in 2014 in their famous paper \cite{ros2014pohozaev} for the RFL, proving non-existence results for the associated semilinear PDE. Afterwards, Ros-Oton, Serra, and Valdinoci generalized their method to anisotropic integro-differential operators, see \cite{ros2017pohozaev}, and the survey \cite{RosOton:survey}.
	
	In the case of the whole $\mathbb R^N$, where there is no boundary term, an integration by parts procedure yields a Pohozaev formula. This method has been applied to derive a Pohozaev identity for the fractional
	anisotropic $p-$Laplacian in $\RN$, see \cite{ambrosio2024plaplacian,ambrosio2025fractional}. Another application of Pohozaev-type energy identities appears in the study of half-harmonic maps (see the note \cite{DaLio} and the references therein).
	
	Finally, in 2016, Grubb obtained Pohozaev identities for space-dependent fractional-order operators using microlocal analysis techniques \cite{grubb2016integration}. Her proof also recovers the original result by Ros-Oton and Serra \cite{ros2014pohozaev}.

	\section{Proof of the Pohozaev identity for the SFL}
	
	We recall two key results that are essential for the proof of Theorem \ref{Pohozaev SFL}: the Schur product theorem and Bochner's theorem.
	
	\begin{theorem}\rm (\textbf{Schur product theorem} \cite[Theorem 7.5.3]{horn2012matrix})\textit{\label{Schur} Let $A=[a_{ij}], B=[b_{ij}] \in M_{n}$ be two positive semidefinite matrices, where $M_{n}$ is the set of $n \times n$ complex matrices. Define the Schur (or Hadamard) product as $A \circ B = [a_{ij}b_{ij}]$. Then $A \circ B$ is positive semidefinite.}
	\end{theorem}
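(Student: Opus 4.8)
The statement is the classical Schur product theorem, so the plan is to reproduce the standard short linear-algebra proof, whose only idea is to exploit a \emph{factorization} of one of the two matrices. Since $B$ is Hermitian positive semidefinite, I would write $B=C^{*}C$ for some $C=[c_{mi}]\in M_{n}$ (for instance $C=B^{1/2}$, or a Cholesky factor); equivalently $b_{ij}=\sum_{m=1}^{n}\overline{c_{mi}}\,c_{mj}$, i.e.\ $B$ is the Gram matrix of the columns of $C$. With this in hand, everything reduces to a regrouping of a double sum.

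Concretely, for an arbitrary $x\in\mathbb{C}^{n}$ one computes
\[
x^{*}(A\circ B)\,x=\sum_{i,j=1}^{n}\overline{x_i}\,x_j\,a_{ij}\,b_{ij}
=\sum_{i,j=1}^{n}\overline{x_i}\,x_j\,a_{ij}\sum_{m=1}^{n}\overline{c_{mi}}\,c_{mj}
=\sum_{m=1}^{n}\sum_{i,j=1}^{n}a_{ij}\,\overline{x_i c_{mi}}\;x_j c_{mj}.
\]
For each fixed $m$, set $y^{(m)}\in\mathbb{C}^{n}$ with entries $y^{(m)}_{i}:=x_i c_{mi}$; then the inner double sum is exactly $\bigl(y^{(m)}\bigr)^{*}A\,y^{(m)}\ge 0$, since $A$ is positive semidefinite. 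Summing these nonnegative numbers over $m$ yields $x^{*}(A\circ B)\,x\ge 0$; as $A\circ B$ is Hermitian (being the entrywise product of Hermitian matrices), it is positive semidefinite, which is the claim. A probabilistic variant gives the same conclusion — realize $A\circ B$ as the covariance matrix of $(X_iY_i)_{i}$ for independent centered Gaussian vectors $X,Y$ with covariances $A$ and $B$ — but the factorization argument above is self-contained and uses no further input.

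There is no genuinely hard step here: the proof is essentially a one-line trick, and the only thing requiring attention is the bookkeeping of conjugations and of the index $m$ when rewriting $a_{ij}b_{ij}$ as a sum of quadratic forms in $A$. The one additional remark relevant for the way this theorem is later applied, in Theorem~\ref{Pohozaev SFL}, is that the matrices $\Q 1$ and $\p s$ there are indexed by $\mathbb{N}$ rather than by a finite set; this is harmless, because a continuous symmetric form on $H^{\max\{2s,3/2\}}_{0}(\Omega)$ is positive semidefinite precisely when all its finite principal truncations are, and truncation commutes with the entrywise product, so one applies the finite-dimensional statement above to each $n\times n$ principal block of $\p s\circ\Q 1$. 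The remaining ingredient — positive semidefiniteness of the principal blocks of $\p s$, with entries given by \eqref{formula-P} — is exactly what Bochner's theorem will supply.
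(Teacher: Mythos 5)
The paper cites this as a classical result from Horn and Johnson \cite[Theorem 7.5.3]{horn2012matrix} and does not include a proof of its own, so there is no internal argument to compare against. Your factorization proof---write $B=C^{*}C$, expand $b_{ij}=\sum_{m}\overline{c_{mi}}\,c_{mj}$, and regroup so that for each fixed $m$ the inner double sum becomes the nonnegative quadratic form $(y^{(m)})^{*}Ay^{(m)}$ with $y^{(m)}_{i}=x_{i}c_{mi}$---is correct, Hermiticity of $A\circ B$ is handled, and this is the standard textbook argument (essentially the one in Horn and Johnson, phrased via a square-root or Cholesky factor rather than rank-one spectral pieces). Your closing observation about the infinite index set is also accurate and matches exactly how the paper deploys the theorem in Steps~4--5 of the proof of Theorem~\ref{Pohozaev SFL}: one applies the finite-dimensional Schur theorem to the $n\times n$ truncations $\Qn{s}=\Pn{s}\circ\Qn{1}$, obtains $\Qn{s}\ge 0$ for each $n$ once $\Pn{s}\ge 0$ is established via Bochner, and then passes to the limit using the $H^{\max\{2s,3/2\}}_{0}(\Omega)$ regularity to ensure convergence of the truncated quadratic forms.
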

	
	The following celebrated result characterizes positive semidefinite functions:
	
	\begin{theorem}\rm (\textbf{Bochner theorem} \cite[Sections 1.4.1 and 1.4.3]{rudin2017fourier})\label{Bochner}\textit{ A continuous function $H:\RN\xrightarrow{}\R$ is positive semidefinite if and only if it is the Fourier transform of a non-negative bounded regular Borel measure $\mu$ on $\RN$. Equivalently, for every choice of complex numbers $c_{1},\dots, c_{n}$ and any finite set of points $x_{1}, \dots, x_{n}$, we have
			\begin{equation*}
				\sum_{j,k=1}^{n} c_{j} \overline{c}_{k} H(x_{j}-x_{k}) \geq 0
			\end{equation*}
			if and only if there exists a non-negative bounded regular Borel measure $\mu$ on $\RN$ such that
			\begin{equation*}
				H(t) = \int_{\RN} e^{-2\pi i \langle t,x\rangle} \, d\mu(x),
		\end{equation*}}
		for all $t \in \RN$.
	\end{theorem}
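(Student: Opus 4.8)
The plan is to prove both implications of the stated equivalence. The forward direction is a one-line computation; the substance lies in the converse, which I would obtain by Gaussian regularization followed by a weak-$*$ compactness argument. For sufficiency: if $H(t)=\int_{\RN}e^{-2\pi i\langle t,x\rangle}\,d\mu(x)$ with $\mu\ge0$ finite, then for any complex numbers $c_1,\dots,c_n$ and points $x_1,\dots,x_n\in\RN$,
\begin{equation*}
\sum_{j,k=1}^{n}c_{j}\overline{c}_{k}H(x_j-x_k)=\int_{\RN}\Bigl|\sum_{j=1}^{n}c_{j}e^{-2\pi i\langle x_j,x\rangle}\Bigr|^{2}d\mu(x)\;\ge\;0,
\end{equation*}
and continuity of $H$ follows from dominated convergence since $\mu$ is finite.

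For necessity, assume $H$ continuous and positive semidefinite. First I would record the elementary consequences of the defining inequality: taking $n=1$ gives $H(0)\ge0$, and taking $n=2$ at the nodes $0,t$ forces the $2\times2$ matrix with entries $H(0),H(\pm t)$ to be Hermitian positive semidefinite, whence $H(-t)=\overline{H(t)}$ and $|H(t)|\le H(0)$; in particular $H$ is bounded. A Riemann-sum approximation, legitimate since $H$ is bounded and locally uniformly continuous, then upgrades the inequality to its integral form,
\begin{equation*}
\int_{\RN}\int_{\RN}H(x-y)\,f(x)\,\overline{f(y)}\,dx\,dy\;\ge\;0\qquad\text{for every }f\in L^{1}(\RN).
\end{equation*}

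The regularization step is next. For $\delta>0$ put $g_\delta(x)=e^{-\pi\delta^{2}|x|^{2}/2}$ and $h_\delta:=H\,g_\delta$; as $H$ is bounded, $h_\delta\in L^{1}(\RN)$ and $h_\delta\to H$ pointwise as $\delta\to0$. Since $g_\delta$ is a positive multiple of $\phi_\delta*\widetilde{\phi_\delta}$ with $\phi_\delta(x)=e^{-\pi\delta^{2}|x|^{2}}$ and $\widetilde{\phi_\delta}(x)=\overline{\phi_\delta(-x)}$, the substitution $u=x+y$ in the defining double integral of $\widehat{h_\delta}$ yields
\begin{equation*}
\widehat{h_\delta}(\xi)=c_\delta\int_{\RN}\int_{\RN}H(u-y)\,f_{\delta,\xi}(u)\,\overline{f_{\delta,\xi}(y)}\,du\,dy\;\ge\;0,\qquad f_{\delta,\xi}(u):=\phi_\delta(u)\,e^{-2\pi i\langle\xi,u\rangle}\in L^{1}(\RN),
\end{equation*}
with $c_\delta>0$, the inequality coming from the integral form above. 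Thus $\widehat{h_\delta}$ is continuous and nonnegative. To see it is integrable with prescribed mass, I would test against a Gaussian approximate identity $\psi_\eta$ with $\widehat{\psi_\eta}(\xi)=e^{-\pi\eta^{2}|\xi|^{2}}$: the multiplication formula gives $\int_{\RN}\widehat{h_\delta}(\xi)\,e^{-\pi\eta^{2}|\xi|^{2}}\,d\xi=\int_{\RN}h_\delta(x)\,\psi_\eta(x)\,dx$, whose right side tends to $h_\delta(0)=H(0)$ as $\eta\to0$, while monotone convergence identifies the limit of the left side as $\int_{\RN}\widehat{h_\delta}(\xi)\,d\xi$. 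Hence $d\mu_\delta:=\widehat{h_\delta}(\xi)\,d\xi$ is a nonnegative measure of total mass exactly $H(0)$, uniformly in $\delta$, and Fourier inversion gives $h_\delta(t)=\int_{\RN}e^{2\pi i\langle t,\xi\rangle}\,d\mu_\delta(\xi)$.

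It remains to pass to the limit $\delta\to0$. By weak-$*$ compactness (Banach--Alaoglu, or Helly's selection theorem) some subsequence $\mu_{\delta_n}$ converges weakly-$*$ to a nonnegative measure $\mu$. The delicate point---and what I expect to be the main obstacle---is \emph{tightness} of the family $\{\mu_\delta\}_{\delta\le1}$, which is exactly where continuity of $H$, rather than mere positive-definiteness, is used: averaging the identity $H(0)-h_\delta(t)=\int_{\RN}(1-e^{2\pi i\langle t,\xi\rangle})\,d\mu_\delta(\xi)$ over $t$ in a small ball $B_r$ produces $\int_{\RN}(1-m_r(\xi))\,d\mu_\delta(\xi)$, where $m_r(\xi)=|B_r|^{-1}\int_{B_r}\cos(2\pi\langle t,\xi\rangle)\,dt$ is real, bounded by $1$, and bounded away from $1$ once $r|\xi|$ exceeds an absolute constant; since $H$ is continuous at $0$ and $0\le1-g_\delta(t)\le\pi\delta^{2}|t|^{2}/2$, the averaged left side tends to $0$ as $r\to0$ uniformly in $\delta\le1$, forcing $\sup_{\delta\le1}\mu_\delta(\{|\xi|\ge R\})\to0$ as $R\to\infty$. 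Tightness together with weak-$*$ convergence gives $\int_{\RN}e^{2\pi i\langle t,\xi\rangle}\,d\mu_{\delta_n}(\xi)\to\int_{\RN}e^{2\pi i\langle t,\xi\rangle}\,d\mu(\xi)$ for each fixed $t$, and since $h_{\delta_n}(t)\to H(t)$ we obtain $H(t)=\int_{\RN}e^{2\pi i\langle t,\xi\rangle}\,d\mu(\xi)$; at $t=0$ this shows $\mu(\RN)=H(0)<\infty$, so $\mu$ is finite, and replacing $\mu$ by its reflection in the origin puts the identity in the stated form. An alternative, more structural proof would realize $H$ as a diagonal matrix coefficient of a cyclic unitary representation of $\RN$ and invoke the spectral theorem, but I would favour the elementary regularization argument just sketched.
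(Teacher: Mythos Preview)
The paper does not prove Bochner's theorem at all: it is stated with a citation to Rudin and then invoked as a black box in Step~5 of the proof of Theorem~\ref{Pohozaev SFL}. So there is no ``paper's own proof'' to compare against.

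That said, your proposal is a correct and self-contained proof of the classical result. The sufficiency direction is the standard one-line computation. For necessity, your Gaussian regularization argument is one of the canonical routes: the key observations---that $e^{-\pi\delta^2|x|^2/2}$ is a positive multiple of $\phi_\delta*\widetilde{\phi_\delta}$, that this forces $\widehat{h_\delta}\ge0$ via the integral form of positive-definiteness, that the multiplication formula pins the total mass at $H(0)$, and that continuity of $H$ at the origin is precisely what delivers tightness---are all correctly identified and executed. The alternative route you mention at the end (realizing $H$ as a matrix coefficient of a unitary representation and invoking the spectral theorem, i.e.\ the GNS construction followed by Stone's theorem) is closer in spirit to Rudin's treatment in the cited reference, which works in the generality of locally compact abelian groups; your elementary approach is specific to $\RN$ but entirely adequate for the paper's needs, since the only application is to the one-variable function $H_s(t)=\sinh(st)/\sinh t$.
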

	
	Now let us proceed with the proof of the Pohozaev identity for the SFL.
	
	\begin{remark2}[\textit{On the regularity hypotheses of Theorem \ref{Pohozaev SFL}}]
		The $C^{1,1}$ assumption on the domain makes the proof clearer by avoiding heavy technicalities, though it may be possible to relax this condition. The regularity $u\in H^{\max \lsb 2s, 3/2 \rsb}_0(\Omega)$ is needed in the decomposition of $\Q s$ as a Schur product of $\p s$ and $\Q 1$ to ensure that all the quantities involved are finite. On the one hand, the exponent $3/2$ is the minimal regularity requirement for the boundedness of $\Q 1$, when written in the form of a boundary integral as in \eqref{eq: abstract Pohozaev local}. On the other hand, the exponent $2s$ is required to have $(-\Delta|_{\Omega})^{s}u\in L^2(\Omega)$. Notably, when $s\in [3/4,1]$, the $H_0^{2s}$ regularity is sufficient for all such purposes.
		However, if we only need $\Q s\ge 0$, we can show by approximation that the sign is preserved even if $\Q 1$ diverges (see the proof of Proposition \ref{prop:Pohozaev-semilinear}).
	\end{remark2}
	\medskip
	
	\begin{proof}[\bf Proof of Theorem \ref{Pohozaev SFL}] We shall prove first the quadratic structure in the $s$-Pohozaev expression \eqref{Pohozaev SFL id}, that we recall here:
		\begin{equation} \label{Pohozaev SFL.proof}
			\Q s [u] :=\lp\frac{2s-N}{2}\rp \int_{\Omega} u(-\Delta|_{\Omega})^{s} u\,dx - \int_{\Omega}(x \cdot \nabla u)(-\Delta|_{\Omega})^{s} u \,dx.
		\end{equation}
		We shall see that the right-hand side above can be rewritten as a quadratic form over our chosen Fourier basis $(\lambda_{k}, \varphi_{k})$ of $L^2(\Omega)$,
		\begin{equation}\label{definition-Q}
			\Q s [u] = \suma{j,k=1}{\infty} \hQ s j k \hat{u}_{j} \hat{u}_{k} = \suma{j,k=1}{\infty} \mP s j k \hQ 1 j k \hat{u}_{j} \hat{u}_{k}, 
		\end{equation}
		%
		where the transition matrix $\mP s {}{}$ is given in \eqref{formula-P}, and $\Q 1 [u]$ is the quadratic form from the classical Pohozaev identity with the expression
		\begin{equation*}
			\Q 1[u]=\suma{j,k=1}{\infty} \hQ 1jk \hat{u}_{j} \hat{u}_{k}, \quad \mbox{where }
			\hQ 1 j k = \frac{1}{2} \int_{\partial \Omega} (\nabla \varphi_{j} \cdot \nabla \varphi_{k})(x \cdot \nu)\,dS.
		\end{equation*}
		The second part of the proof will focus on showing that $\Q 1 [u] \ge 0$ implies that $\Q s [u]\ge 0$. In order to simplify the presentation, we will assume that all eigenvalues $\lambda_{j}$ are different, and postpone the case of multiplicity until the end of the proof.
		
		\medskip
		\noindent\textsc{Step 1.} \textit{A consequence of the classical Pohozaev identity in Fourier variables.} We first prove the following preliminary identity, valid for all $j,k\ge 1$,
		\begin{equation}\label{consequence classical Pohozaev}
			\int_{\Omega}\lp x \cdot \nabla \varphi_{j}\rp \varphi_{k}\, dx=
			\begin{cases}
				\displaystyle -N/2, & \mbox{for }j=k,
				\\[0.1cm]
				\displaystyle\frac{1}{\lambda_{j}-\lambda_{k}} \int_{\partial\Omega} \nabla \varphi_{j} \cdot \nabla \varphi_{k}(x \cdot \nu) \, dS, & \mbox{for } j\neq k \mbox{ with }\lambda_{j}\neq\lambda_{k} .
			\end{cases}
		\end{equation}
		For $j = k$, we have
		$\nabla(\varphi_{j}^{2})=2\varphi_{j} \nabla \varphi_{j} $, and then
		\begin{equation}\label{plugging eigenfunction}
			\int_{\Omega}\lp x \cdot \nabla \varphi_{j}\rp \varphi_{j}\,dx=\frac{1}{2} \int_{\Omega} x \cdot \nabla\lp\varphi_{j}^{2}\rp\,dx=-\frac{1}{2} \int_{\Omega} \varphi_{j}^{2} \,\nabla\cdot x \,dx=-\frac{N}{2}.
		\end{equation}
		Now we consider the case $j\ne k$, for which we shall use the classical Pohozaev identity:
		\begin{equation} \label{Pohozaev local.proof}
			\frac{(2-N)}{2} \int_{\Omega} u(-\Delta) u\,dx - \int_{\Omega}(x\cdot \nabla u)(-\Delta)u\,dx = \frac{1}{2} \int_{\partial \Omega}\left|\frac{\partial u}{\partial\nu}\right|^{2}(x \cdot \nu)\,dS.
		\end{equation}
		Substituting $u = \varphi_{j}$ and using \eqref{plugging eigenfunction} in the above formula gives, for all $j\ge 1$,
		\begin{equation*}
			\lambda_{j}=
			\frac{1}{2} \int_{\partial \Omega}|\nabla \varphi_{j}|^{2}(x \cdot \nu) \, dS\,.
		\end{equation*}
		With this in mind, it will be useful to write
		\begin{equation}\label{Q1}
			\hQ 1 j k = \frac{1}{2} \int_{\partial \Omega} (\nabla \varphi_{j} \cdot \nabla \varphi_{k})(x \cdot \nu)\,dS=\lambda_{k} \delta_{jk}
			+\frac{(1-\delta_{jk})}{2}\int_{\partial\Omega} (\nabla \varphi_{j} \cdot \nabla \varphi_{k})(x \cdot \nu)\,dS.
		\end{equation}
		Next, we substitute $u = \varphi_{j}$, $u = \varphi_{k}$ and $u = \varphi_{j}+\varphi_{k}$ into \eqref{Pohozaev local.proof}, which yields
		\begin{equation}\label{plugging eigenfunctions}
			\lambda_{k} \int_{\Omega}\lp x \cdot \nabla \varphi_{j}\rp \varphi_{k} \, dx+ \lambda_{j} \int_{\Omega}\lp x \cdot \nabla \varphi_{k}\rp \varphi_{j} \,dx=-\int_{\partial\Omega} \lp\nabla \varphi_{j} \cdot \nabla \varphi_{k}\rp\,(x \cdot \nu)\,dS,
		\end{equation}
		taking into account the identities above. In addition,
		\begin{equation}\label{antisymmetry.1}
			\begin{aligned}
				\int_{\Omega}\lp x \cdot \nabla \varphi_{j}\rp \varphi_{k} \, dx + \int_{\Omega}\lp x \cdot \nabla \varphi_{k}\rp \varphi_{j}\, dx = & \int_{\Omega}\lp x \cdot \nabla\lp\varphi_{j} \varphi_{k}\rp\rp\, dx
				\\
				= & \int_{\Omega} \varphi_{j} \varphi_{k} \nabla\cdot x \, dx =0,
			\end{aligned}
		\end{equation}
		since $\varphi_{j}$ and $\varphi_{k}$ are orthogonal.
		
		\noindent If $\lambda_{j}\neq \lambda_{k}$ we obtain from \eqref{plugging eigenfunctions} that
		\begin{equation}\label{consequence classical Pohozaev with lambdas}
			(\lambda_{j}-\lambda_{k})\int_{\Omega}\lp x \cdot \nabla \varphi_{j}\rp \varphi_{k} \, dx=\int_{\partial\Omega} \lp \nabla \varphi_{j} \cdot \nabla \varphi_{k}\rp\,(x \cdot \nu)\,dS,
		\end{equation}
		hence,
		\begin{equation*}
			\int_{\Omega} \lp x \cdot \nabla \varphi_{j}\rp \varphi_{k} \,dx = \frac{1}{\lambda_{j}-\lambda_{k}} \int_{\partial\Omega} \lp\nabla \varphi_{j} \cdot \nabla \varphi_{k}\rp\,(x \cdot \nu)\,dS\,,\quad \mbox{for }\lambda_{j}\neq\lambda_{k},
		\end{equation*}
		and this concludes the proof of \eqref{consequence classical Pohozaev}.
		
		\noindent\textsc{Step 2.} \textit{Rewriting the right-hand side of \eqref{Pohozaev SFL.proof} in Fourier variables.} We have that
		\begin{equation}\label{Pohozaev in Fourier}\begin{split}
				\lp \frac{2s-N}{2} \rp \int_{\Omega} u(-\Delta|_{\Omega})^{s} u\,dx & - \int_{\Omega}(x \cdot \nabla u)(-\Delta|_{\Omega})^{s} u \,dx\\
				&= \lp \frac{2s-N}{2} \rp \sum_{j=1}^{\infty} \lambda_{j}^{s} \hat{u}_{j}^{2}
				- \suma{j,k=1}{\infty} \hat{u}_{j} \hat{u}_{k} \lambda_{k}^{s} \int_{\Omega}\lp x \cdot \nabla \varphi_{j}\rp \varphi_{k}\,dx.
			\end{split}
		\end{equation}
		To see this, recall that $u=\sum_{k=1}^{\infty} \hat{u}_{k} \varphi_{k}$. Then the first integral in \eqref{Pohozaev in Fourier} becomes
		\begin{equation*}
			\int_{\Omega} u\lp-\Delta| _{\Omega}\rp^{s} u \, dx= \suma{j,k=1}{\infty} \hat{u}_{k} \hat{u}_{j} \int_{\Omega} \varphi_{k} \lambda_{j}^{s} \varphi_{j} \, dx=\sum_{j=1}^{\infty} \lambda_{j}^{s} \hat{u}_{j}^{2},
		\end{equation*}
		and for the second integral in \eqref{Pohozaev in Fourier}, we find
		\begin{equation*}
			\int_{\Omega}(x \cdot \nabla u)\lp-\Delta|_{\Omega}\rp^{s} u \,dx
			= \suma{j,k=1}{\infty} \hat{u}_{j} \hat{u}_{k} \lambda_{k}^{s} \int_{\Omega}\lp x \cdot \nabla \varphi_{j}\rp \varphi_{k}\,dx.
		\end{equation*}
		
		\noindent\textsc{Step 3.} \textit{The quadratic form structure of the $s$-Pohozaev identity in Fourier variables. } Our aim here is to show that the right-hand side of the Pohozaev identity, written in \eqref{Pohozaev in Fourier} in Fourier variables, has the quadratic structure given in \eqref{definition-Q}. That is, $\Q s$ can be written as a Schur product, more precisely:
		\begin{equation}\label{Schur-structure}\Q s [u]=\p s\circ \Q 1[u],
		\end{equation}
		where $\Q 1$ is defined in \eqref{Q1}
		and we recall the formula for $\mP s {}{}$ from \eqref{formula-P}:
		\begin{equation}\label{Ps}
			\mP s j k = s\lambda_{j}^{s-1}\delta_{jk} + \lp\frac{\lambda_{j}^{s}-\lambda_{k}^{s}}{\lambda_{j}-\lambda_{k}}\rp(1-\delta_{jk}).
		\end{equation}
		
		\noindent Now, the proof of \eqref{Schur-structure}--\eqref{Ps} proceeds as follows: we have seen in \eqref{Pohozaev in Fourier} that the right-hand side of the $s$-Pohozaev identity can be written in Fourier variables as
		\begin{equation*}
			\Q s [u] = \lp\frac{2s-N}{2}\rp \sum_{j=1}^{\infty} \lambda_{j}^{s} \hat{u}_{j}^{2}
			- \suma{j,k=1}{\infty} \lambda_{k}^{s} \hat{u}_{j} \hat{u}_{k} \int_{\Omega}\lp x \cdot \nabla \varphi_{j}\rp \varphi_{k}\,dx =: \suma{j,k=1}{\infty} \tilde{Q}^{(s)}_{j k} \hat{u}_{j} \hat{u}_{k},
		\end{equation*}
		where the coefficients of the (non-symmetric) quadratic form $\tilde{Q}^{(s)}$ are given by
		\begin{equation}\label{Q tilde}
			\tilde{Q}^{(s)}_{j k}=\lp\frac{2s-N}{2}\rp \lambda_{k}^{s} \delta_{jk}
			- \lambda_{k}^{s}\int_{\Omega} \lp x \cdot \nabla \varphi_{j}\rp \varphi_{k}\,dx.
		\end{equation}
		Applying \eqref{consequence classical Pohozaev} we get
		\begin{equation*}
			\tilde{Q}^{(s)}_{j k}=
			s \lambda_{j}^{s} \delta_{j k}- (1-\delta_{jk})\displaystyle\frac{\lambda_{k}^{s}}{\lambda_{j}-\lambda_{k}} \int_{\partial \Omega} \nabla \varphi_{j} \cdot \nabla \varphi_{k}(x \cdot \nu)\,dS.
		\end{equation*}
		We symmetrize the matrix by defining $\hQ s j k=\frac{1}{2}\lp\tilde{Q}^{(s)}_{j k}+\tilde{Q}^{(s)}_{k j}\rp$, which does not change the quadratic form. Thus
		\begin{equation*}
			\Q s [u] = \suma{j,k=1}{\infty} \tilde{Q}^{(s)}_{j k} \hat{u}_{j} \hat{u}_{k} = \suma{j,k=1}{\infty} \hQ s jk \hat{u}_{j} \hat{u}_{k},
		\end{equation*}
		where
		\begin{equation*}
			\hQ s jk=
			s \lambda_{j}^{s} \delta_{j k}+\frac{\lp1-\delta_{j k}\rp}{2}\lp\frac{\lambda_{j}^{s}-\lambda_{k}^{s}}{\lambda_{j}-\lambda_{k}}\rp\int_{\partial \Omega} (\nabla \varphi_{j} \cdot \nabla \varphi_{k})(x \cdot \nu)\,dS,
		\end{equation*}
		and \eqref{Schur-structure} follows by identifying terms in \eqref{Q1}.
		
		\noindent \textsc{Step 4.} \textit{Positivity of $\Q s$ via Schur's product theorem.}
		We apply Theorem \ref{Schur} to the Schur product \eqref{Schur-structure}. We first establish that $\Q 1$ is positive semidefinite. In star-shaped domains, we have
		\begin{equation*}
			\suma{j,k=1}{\infty} \hQ 1 j k \hat{u}_{j} \hat{u}_{k} = \frac{1}{2} \int_{\partial \Omega}\left|\sum_{j=1}^{\infty} \nabla \varphi_{j} \hat{u}_{j} \right|^2(x \cdot \nu)\,dS\geq 0, \qquad \mbox{for all } u\in H^{3/2}(\overline{\Omega}).
		\end{equation*}
		To apply Theorem \ref{Schur}, we consider the finite-dimensional matrices $\Qn s := \Pn s \circ \Qn 1$, where $\Pn s$ and $\Qn 1$ are the $n\times n$ truncations of $\p s$ and $\Q 1$, respectively. Since $\Qn 1 \geq 0$, Theorem \ref{Schur} will give $\Qn s \geq 0$ for all $n$ once we establish that $\Pn s \geq 0$. We will prove this in the next step. Since $u \in H^{\max\{2s,3/2\}}$, we can pass to the limit $n\to\infty$ and obtain $\Q s \geq 0$.
		
		\noindent \textsc{Step 5.} \textit{$\Pn s$ is positive semidefinite through Bochner's theorem.}
		Recall that $\Pn s$ is the $n\times n$ truncation of the infinite matrix $\p s$ with elements given by \eqref{Ps}. To show $\Pn s \geq 0$ for all $n$, it suffices to prove that $\p s$ is positive semidefinite.
		
		Writing $\lambda_{k}=e^{2 \mu_{k}}$, we have
		\begin{equation*}
			\begin{aligned}
				\mP s j k & =  se^{2 (s-1)\mu_{j}}\delta_{jk} + \frac{e^{2s\mu_j} - e^{2s\mu_k}}{e^{2\mu_j} - e^{2\mu_k}} \lp 1-\delta_{jk}\rp
				\\
				& = se^{(s-1)(\mu_{j}+\mu_{k})}\delta_{jk} + e^{(s-1)(\mu_{j}+\mu_{k})}\frac{e^{s(\mu_j-\mu_k)} - e^{-s(\mu_j-\mu_k)}}{e^{\mu_j - \mu_k} - e^{-(\mu_j-\mu_k)}}\lp 1-\delta_{jk}\rp
				\\
				& = e^{(s-1)(\mu_{j}+\mu_{k})} \tilde{P}_{jk}^{s},
			\end{aligned}
		\end{equation*}
		where
		\begin{equation*}
			\tilde{P}_{jk}^{s}= s\delta_{jk} + \frac{\sinh \lp s \lp \mu_{j}-\mu_{k} \rp \rp}{\sinh \lp \mu_{j}-\mu_{k} \rp}\lp 1-\delta_{jk}\rp=:H_s \lp \mu_{j}-\mu_{k} \rp,
		\end{equation*}
		and $H_s(t) = \frac{\sinh(s t)}{\sinh t}$, with the understanding that $H_s(0) = s$ by continuity.
		
		In order to prove the positivity of $P^{(s)}$, we observe that $\mP s jk$ can be written as a Schur product $e^{(s-1)(\mu_{j}+\mu_{k})} \circ \tilde{P}_{jk}^{s}$. Applying Theorem \ref{Schur}, it suffices to show that both factors are positive semidefinite matrices. The matrix with entries $e^{(s-1)(\mu_j + \mu_k)}$ is positive semidefinite as an outer product of a vector with itself. Thus it remains to show that $\tilde{P}^s$ is positive semidefinite.
		
		For this, we apply Theorem \ref{Bochner} which says that $H_s$ is a positive semidefinite function if and only if $H_s$ is the Fourier transform of a non-negative measure. Thus we need to calculate
		\begin{equation*}
			\begin{aligned}
				\mathcal{F}^{-1}(H_s)(\xi) = & \int_{\R} e^{2\pi i \xi t}H_s(t)\,d t
				\\
				= & \int_{\R} \cos(2\pi \xi t) \frac{\sinh(s t)}{\sinh t}\,d t + i\int_{\R} \sin (2\pi \xi t) \frac{\sinh(s t)}{\sinh t }\,d t.
			\end{aligned}
		\end{equation*}
		The second term in the right-hand side above vanishes since the integrand is odd. Now, a straightforward computation gives
		\begin{equation*}
			\begin{aligned}
				\mathcal{F}^{-1}(H_s)(\xi) = & \int_{\R} \cos(2\pi \xi t) \frac{\sinh(st)}{\sinh t}\,d t = \frac{\pi\sin(s\pi)}{\cos(s\pi) + \cosh(2\pi^{2}\xi)} \geq 0,
			\end{aligned}
		\end{equation*}
		which shows our claim.
		
		\noindent\textsc{Step 6.} \textit{The case of repeated eigenvalues.} We will indicate the necessary changes in the proof above.
		
		If $\lambda_{j}=\lambda_{k}$ for some $j\neq k$, then the right-hand side of \eqref{consequence classical Pohozaev with lambdas} vanishes, and we can only conclude antisymmetry from \eqref{antisymmetry.1}, that is,
		\begin{equation}\label{antisymmetry}
			\int_{\Omega}\lp x \cdot \nabla \varphi_{j}\rp \varphi_{k} \, dx =- \int_{\Omega}\lp x \cdot \nabla \varphi_{k}\rp \varphi_{j}\, dx.
		\end{equation}
		In particular, it follows from \eqref{plugging eigenfunctions}
		that
		\begin{equation*}
			\hQ 1 j k=0.
		\end{equation*}
		In this case, the argument in Step 3 is modified as follows. From \eqref{Q tilde} we obtain
		$$\tilde{Q}^{(s)}_{j k}=
		- \lambda_{k}^{s}\int_{\Omega} \lp x \cdot \nabla \varphi_{j}\rp \varphi_{k}\,dx,$$
		and then by \eqref{antisymmetry}
		\begin{equation*}
			\hQ s j k = \frac{1}{2}\lp- \lambda_{k}^{s}\int_{\Omega} \lp x \cdot \nabla \varphi_{j}\rp \varphi_{k}\,dx + \lambda_{j}^{s}\int_{\Omega} \lp x \cdot \nabla \varphi_{j}\rp \varphi_{k}\,dx\rp = 0.
		\end{equation*}
		Then, the Schur product structure \eqref{conclusion-structure} holds trivially since both $\hQ 1 jk$ and $\hQ s jk$ vanish. This completes the proof for the case of repeated eigenvalues.
		
	\end{proof}
	
	\section{Proofs of non-existence results}
	
	The strategy to prove non-existence results from the Pohozaev identity is analogous to the classical case. First, we deduce an inequality for solutions to semilinear equations.
	
	\begin{proof}[Proof of Proposition \ref{prop:Pohozaev-semilinear}] This is a straightforward corollary of Theorem \ref{Pohozaev SFL}.
		
		\noindent First, note that since $u$ is bounded, $|u| \leq M$, $f \in C^{0,1}(\overline{\Omega}\times[-M,M])$. Then, by \cite{LionsNonhomoMagenes1972,AdamsSobolevFournier2003} (see also \cite[Appendix]{bonforte2014existence}), and the fact that if $u \in H^{\alpha}$, $f(x,u) \in H^{\min\{1,\alpha\}}$, we can bootstrap the regularity as follows:
		\begin{equation*}
			u \in L^{2} \ \Rightarrow \ f(x,u) \in L^{2} \ \Rightarrow \ u \in H^{2s} \ \Rightarrow \ f(x,u) \in H^{\min\{1,2s\}} \ \Rightarrow \ \ldots \ \Rightarrow \ u \in H^{1+2s}.
		\end{equation*}
		
		When $s \geq 1/4$, we have $1+2s \geq 3/2$, so $u \in H^{1+2s} \subseteq H^{\max[2s, 3/2]}$ and Theorem \ref{Pohozaev SFL} applies directly. When $s < 1/4$, we have $u \in H^{1+2s}$ but $1+2s < 3/2$. In this case, we use an approximation argument. Let $u_n \in H^{3/2}_0(\Omega)$ be a sequence approximating $u$ in $H^{1+2s}$. For each $n$, Theorem \ref{Pohozaev SFL} gives $Q^{(s)}[u_n] \geq 0$. Since $u \in H^{1+2s}$ ensures that $Q^{(s)}[u]$ is well-defined and finite, and since $Q^{(s)}$ is a continuous bilinear form on $H^{1+2s} \times H^{1+2s}$, we have $Q^{(s)}[u_n] \to Q^{(s)}[u]$ as $n \to \infty$. Taking the limit yields $Q^{(s)}[u] \geq 0$.
		
		Once we have the positivity of $\Q s [u]$, the rest of the argument is classical. Indeed, by the definition of $F$ we know that,
		\begin{equation*}
			\nabla F(x,u) = F_{x}(x,u) + \nabla u \cdot F_{u}(x,u) = F_{x}(x,u) + \nabla u \cdot f(x,u),
		\end{equation*}
		thus since $u$ is a solution to problem \eqref{pb SFL dependence x},
		\begin{equation*}
			(x\cdot \nabla u) (-\Delta|_{\Omega})^{s}u = (x\cdot \nabla u) f(x,u) = x\cdot \nabla F(x,u) - x\cdot F_{x}(x,u).
		\end{equation*}
		Knowing that $ N = \operatorname{div}(x)$ and integrating by parts,
		\begin{equation*}
			\int_{\Omega}(x\cdot \nabla u) (-\Delta|_{\Omega})^{s}u\,dx = \int_{\Omega}(x\cdot \nabla u) f(x,u)\,dx = -N\int_{\Omega}F(x, u)\,dx -\int_{\Omega} x\cdot F_{x}(x, u) \, dx.
		\end{equation*}
		The conclusion follows from the fact that $\Q s [u]\geq 0$.
	\end{proof}
	
	Our non-existence results easily follow from contradicting the sign condition that comes from the Pohozaev identity. We prove the first of them for the sake of illustration.
	\begin{proof}[Proof of Corollary \ref{cor: non existence SFL dependence x}]	
		Let $u$ be a bounded non-trivial solution to problem \eqref{pb SFL dependence x}. Then, by Proposition \ref{prop:Pohozaev-semilinear},
		\begin{equation*}
			\lp\frac{2s-N}{2}\rp \int_{\Omega} uf(x,u)\,dx + N\int_{\Omega}F(x, u)\,dx + \int_{\Omega} x\cdot F_{x}(x, u) \, dx \geq 0.
		\end{equation*}
		Moreover, we are assuming $u \not\equiv 0$ and
		$$\lp\frac{2s-N}{2}\rp tf(x, t) + NF(x,t) + x\cdot F_{x}(x,t) < 0, \quad \mbox{for all }x\in\Omega, \mbox{and } t\in \R\setminus\{0\},$$
		and hence
		$$\lp\frac{2s-N}{2}\rp \int_{\Omega} uf(x,u)\,dx + N\int_{\Omega}F(x, u)\,dx + \int_{\Omega} x\cdot F_{x}(x, u) \, dx < 0,$$
		a contradiction.
	\end{proof}
	
	Finally, Corollary \ref{cor: non-existence SFL} follows from Corollary \ref{cor: non existence SFL dependence x} by taking $f$ independent of $x$, and Corollary \ref{cor: non-existence semilinear pb} follows by specializing to the power nonlinearity $f(t) = |t|^{p-1}t$.
	
	\subsection{Acknowledgments.} I.B-C., M.B. and M.G. acknowledge financial support from the grants PID2020-113596GBI00, PID2023-150166NB-I00, RED2022-134784-T, all funded by Ministry of Science and Innovation, Spain, MCIN/ AEI/10.13039/501100011033, the “Severo Ochoa Programme for Centers of Excellence in R\&D” (CEX2019-000904-S), and the financial support from the FPI-grant PRE2021-097207 (Spain).
	
	\noindent C.T.-L. has received funding from the European Research Council (ERC) under the Grant Agreement No 862342, from AEI project PID2024-156429NB-I00 (Spain), and from the Grant CEX2023-001347-S funded by MICIU/AEI/10.13039/501100011033 (Spain).
	
	

	\authorinfo
	{Itahisa Barrios-Cubas}
	{Departamento de Matem\'{a}ticas, Universidad Aut\'{o}noma de Madrid,
		\newline ICMAT - Instituto de Ciencias Matem\'{a}ticas, CSIC-UAM-UC3M-UCM,
		\newline Campus de Cantoblanco, 28049 Madrid, Spain.}
	{itahisa.barrios@uam.es}
	
	\authorinfo
	{Matteo Bonforte}
	{Departamento de Matem\'{a}ticas, Universidad Aut\'{o}noma de Madrid,
		\newline ICMAT - Instituto de Ciencias Matem\'{a}ticas, CSIC-UAM-UC3M-UCM,
		\newline Campus de Cantoblanco, 28049 Madrid, Spain.}
	{matteo.bonforte@uam.es}
	
	\authorinfo
	{Mar\'{i}a del Mar Gonz\'{a}lez}
	{Departamento de Matem\'{a}ticas, Universidad Aut\'{o}noma de Madrid,
		\newline ICMAT - Instituto de Ciencias Matem\'{a}ticas, CSIC-UAM-UC3M-UCM,
		\newline Campus de Cantoblanco, 28049 Madrid, Spain.}
	{mariamar.gonzalezn@uam.es}
	
	\authorinfo
	{Clara Torres-Latorre}
	{Instituto de Ciencias Matem\'{a}ticas,
		\newline Consejo Superior de Investigaciones Cient\'{i}ficas,
		\newline C/ Nicol\'{a}s Cabrera, 13-15, 28049 Madrid, Spain.}
	{clara.torres@icmat.es}
	
\end{document}